\newtheorem{theorem}{Theorem}[section]
\newtheorem{proposition}[theorem]{Proposition}
\newtheorem{lemma}[theorem]{Lemma}
\newtheorem*{theorem*}{Theorem}
\newtheorem*{problem*}{Problem}
\def\kaxxa{{\vcenter {\hrule height .2mm
\hbox{\vrule width .2mm height 2mm \kern 2mm
\vrule width .2mm} \hrule height .2mm}}}
\newcommand{\cay}[2]{\mathsf{Cay}\left(#1 ; #2\right)}
\renewcommand{\mod}[2]{#1 \ \left(\mathrm{mod} \ #2\right)}
\tikzstyle{vertex}=[fill=black, draw=black, shape=circle, thick, scale=0.75]
\tikzstyle{blue_clique}=[fill=white, draw=blue, shape=circle, ultra thick]
\tikzstyle{clique}=[fill=white, draw=black, shape=circle, scale=0.75]
\tikzstyle{red_vertex}=[fill=red, draw=red, shape=circle, scale=0.75, thick]
\tikzstyle{edge}=[-, fill={rgb,255: red,0; green,128; blue,128}]
\tikzstyle{dir_edge}=[->]
\tikzstyle{blue_edge}=[-, draw=blue, ultra thick]
\tikzstyle{red_edge}=[-, draw=red, ultra thick]
\tikzstyle{box}=[-, fill=gray, fill opacity=0.2]
\tikzstyle{dashed_edge}=[-, dashed]
\tikzset{every picture/.style={font issue=\footnotesize},
         font issue/.style={execute at begin picture={#1\selectfont}}
        }
\title{On $(r,c)$-constant, planar and circulant graphs}
\author{\begin{tabular}{cc}Yair Caro & Xandru Mifsud \\ University of Haifa-Oranim & University of Malta \\ \href{mailto:yacaro@kvgeva.org.il}{\small yacaro@kvgeva.org.il} & \href{mailto:xmif0001@um.edu.mt}{\small xmif0001@um.edu.mt}\end{tabular}}
\date{}
\begin{document}

\maketitle

\begin{abstract}
This paper concerns $(r,c)$-constant graphs, which are $r$-regular graphs in which the subgraph induced by the open neighbourhood of every vertex has precisely $c$ edges. The family of $(r,c)$-graphs contains vertex-transitive graphs (and in particular Cayley graphs), graphs with constant link (sometimes called \textit{locally isomorphic graphs}), $(r,b)$-regular graphs,  strongly regular graphs, and much more. 

This family was recently introduced in \cite{caro2023} serving as important tool in constructing flip graphs \cite{caro2023, mifsud2024}.
 
In this paper we shall mainly deals with the following:

\begin{enumerate}[i.]
	\item Existence and non-existence of $(r, c)$-planar graphs.  We completely determine the cases of existence and non-existence of such graphs and supply the smallest order in the case when they exist.
 
 	\item We consider the existence of $(r, c)$-circulant graphs. We prove that for $c \equiv \mod{2}{3}$ no $(r,c)$-circulant graph exists and that for $c \equiv \mod{0, 1}{3}$, $c > 0$ and $r \geq 6 + \sqrt{\frac{8c - 5}{3}}$ there exists $(r,c)$-circulant graphs. Moreover for $c = 0$ and $r \geq 1$, $(r, 0)$-circulants exist.
 
	\item We consider the existence and non-existence of small $(r,c)$-constant graphs,  supplying a complete table of the smallest order of graphs we found for $0 \leq c \leq \binom{r}{2}$ and $r \leq 6$. We shall also determine all the cases in this range for which $(r,c)$-constant graphs don’t exist. We establish a public database of $(r,c)$-constant graphs for varying $r$, $c$ and order.
\end{enumerate}
\end{abstract}
\section{Introduction}

This paper concerns $(r, c)$-constant graphs (or simply $(r,c)$-graphs) which are $r$-regular graphs in which the subgraph induced by the open neighbourhood of every vertex has precisely $c$ edges. 

The family of $(r,c)$-graphs contains within it a large number of other families of important graphs, such as vertex-transitive graphs \cite{Dobson_Malnic_Marusic_2022} (and in particular Cayley graphs \cite{LauriScap} and circulants \cite{yu2023classification}), graphs with constant link \cite{brouwer2023locally}, $(r,b)$-regular graphs \cite{conder2021parameters}, strongly regular graphs \cite{Brouwer_Van_Maldeghem_2022}, and more. The containment of these families is illustrated in Figure \ref{RC_Const_Hierarchy}.

\begin{figure}[ht!]
\centering
\includegraphics[width=0.68\textwidth]{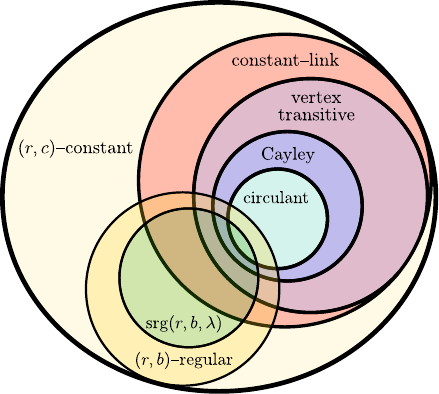}
\caption{Hierarchy of $(r, c)$-constant graphs and their sub-families of interest.}
\label{RC_Const_Hierarchy}
\end{figure} 

The family of $(r, c)$-graphs was recently introduced in \cite{caro2023}, serving as an important tool in the construction of flip-graphs.  We mention that an equivalent notion to $(r,c)$-graphs, with a focus and motivation on self-complementary graphs and strongly regular graphs, was introduced in \cite{NAIR1996201} (after preliminary work in \cite{RADHAKRISHNANNAIR1994205}) under the name `strongly vertex triangle regular graphs'. 

The main facts concerning $(r,c)$-graphs in \cite{caro2023} are summarised below:
\begin{enumerate}
	\item[Fact 1] If $G$ and $H$ are, respectively, an $(r_1, c_1)$-graph and an $(r_2, c_2)$-graph then their Cartesian product $G \ \square \ H$ is an $(r_1 + r_2, c_1 + c_2)$-graph. 
	\item[Fact 2] For $r \geq 1$ and $0 \leq c \leq \frac{r^2}{2} - 5r^{\frac{3}{2}}$, there exists an $(r,c)$-graph.
	\item[Fact 3] Furthermore, for $k \geq 1$ and $r \geq 3k$ there does not exist an $\left(r,\binom{r}{2} - k\right)$-graph. 
	\item[Fact 4] Using Corollary 2.5 in \cite{RADHAKRISHNANNAIR1994205} (and observing that $t(v)$ in the notation of \cite{RADHAKRISHNANNAIR1994205} is $e(v)$ in our notation), it follows that if $G$ is an $(r,c)$-graph on $n$ vertices then the complement $\overline{G}$ is an $\left(n-1-r, \binom{n-1}{2} -  \frac{3r(n-1-r)}{2} - c \right)$-graph.  
\end{enumerate}

The \textit{spectrum} of $r$, denoted by $\mathsf{spec}(r)$, is the set of all such integers $c$ such that an $(r,c)$-graph exists. Therefore from the above, the integer interval $\left[0, \frac{r^2}{2} - 5r^{\frac{3}{2}}\right]$ is contained in  $\mathsf{spec}(r)$. One can also define the \textit{spectrum} of $c$, denoted by $\mathit{spec}(c)$, which is the set of all integers $r$ such that an $(r,c)$-graph exists. We note that $\mathit{spec}(c)$ is in fact determined by the smallest $r$ for which an $(r, c)$-graph $G$ exists, as by Fact 1 above we have that $G \ \square \ K_2$ is an $(r + 1, c)$-graph.

However the order of the constructed graphs by the cartesian product has been shown to be large with respect to the smallest known cases constructed by other methods, in particular using circulants.

From Fact 2 we have that for fixed $c$, for $r \geq \sqrt{2c} \big(1 + o(1)\big)$ there always exists an $(r,c)$-graph. However the proof involves once more the Cartesian product of graphs with constant-links, where the links are the vertex disjoint union of complete graphs. In particular, these complete graphs are determined by solving the Diophantine equations $\sum x_j = r$ and $\sum \binom{x_j}{2} = c$.  

Hence the constructed $(r,c)$-graphs are themselves graphs with constant-links, but the use of the Cartesian products results in the actual construction producing very large graphs. 

By Fact 3 we establish that if $r \geq 3k$ and $\binom{r}{2} - k \leq c \leq \binom{r}{2} - 1$, then no $(r,c)$-graph exists. However, there still remains open the problem of determining for which $c$, where $\frac{r^2}{2} - 5r^{\frac{3}{2}} \leq c \leq \binom{r}{2} - \frac{r}{3}$, such that $(r,c)$-graphs exist. 

Taking into account the fact that Cartesian products do not preserve properties such as $\mathcal{F}$-minor free, where $\mathcal{F}$ is a family of graphs, it is also of interest to consider families of $\mathcal{F}$-minor free graphs and determining which $(r,c)$-graphs belong to these families.

In light of the facts and explanation above, the paper is organised as follows:

In Section 2 we present our results concerning $(r,c)$-planar graphs,  showing in particular that the only pairs $(r, c)$ for which $(r,c)$-planar graphs exist are: $(1,0), (2,0), (2,1), (3,0), (3,1), (3,3), (4,1), (4,2), (4,3), (4,4), (5,4)$ and  $(5,5)$. We also exhibit the smallest such graphs and prove the non-existence of the remaining cases.
 
In Section 3 we consider the existence of $(r, c)$-circulants. Somewhat surprisingly, we prove that $(r,c)$-circulants with $c \equiv \mod{2}{3}$ do not exist. On the other-hand we constructively prove that for given $c \equiv \mod{0,1}{3}$, $c > 0$ and $r \geq 6 + \sqrt{\frac{8c - 5}{3}}$ there exists $(r,c)$-circulant graphs. Moreover for $c = 0$ and $r \geq 1$, $(r, 0)$-circulants exist.  
 
The focus on circulants is motivated by the fact that they are among the simplest structured family of graphs which completely lie within the families of vertex transitive, constant-link, and Cayley graphs. They also play an important role in many other branches of graph theory (such as lower-bound construction for Ramsey numbers, as well as colouring and independence numbers in locally sparse graphs \cite{anderson2024coloring}) .    
 
In Section 4 we provide a complete answer to the question for which pairs $(r,c)$, where $r \leq 6$ and $0 \leq c \leq \binom{r}{2}$, do $(r,c)$-graphs exist. In each case of existence, we supply the smallest example we have found. Otherwise, the other pairs are proved not to be realised by $(r, c)$-graphs. Working on Section 4, we have introduced a database of $(r,c)$-graphs which can be found at \cite{r_c_database}.
 
Lastly, in Section 5 we offer several open problems emerging out of this work.

Before proceeding any further, we introduce some notation. By $n$ we denote the number of vertices in a graph, whilst by $e$ we denote the number of edges. In the case of a planar graph, let $f$ denote the number of faces. 

By $\delta, \Delta$ and $d$ we shall denote the smallest, largest and average degree of a graph $G$, respectively. By $g$ we will denote the girth of the graph.

Given a vertex $v$ in a graph $G$, by $N(v)$ and $N[v]$ we denote the open and closed neighbourhoods of $v$ in $G$, respectively. Furthermore, by $e(v)$ we denote the number of edges in the subgraph induced by the vertices in the open neighbourhood $N(v)$.
\section{Existence of $(r,c)$--planar graphs}

In this section we completely solve the existence problem of $(r,c)$-planar graphs. Since every planar graph $G$ has $\delta(G) \leq 5$ it suffices to consider $1 \leq r \leq 5$ and  $0 \leq c \leq \binom{r}{2}$. Table \ref{planar_rc_values} summarises our findings, where for every possible pair of $(r, c)$ values, \textsc{ne} denotes that an $(r, c)$-planar graph does not exist. Otherwise, we give the number of vertices of the smallest known example. 

In the case of existence we shall provide the witness example, whilst in the case of non--existence we shall give the necessary proofs. 

\begin{table}[h!]
\[\begin{array}{c|ccccccccccc}
\tikz{\node[below left, inner sep=1pt] (def) {$r$};%
      \node[above right,inner sep=1pt] (abc) {$c$};%
      \draw (def.north west|-abc.north west) -- (def.south east-|abc.south east);}
 & 0 & 1 & 2 & 3 & 4 & 5 & 6 & 7 & 8 & 9 & 10\\
\hline
1 & 2 & \cellcolor{lightgray} & \cellcolor{lightgray} & \cellcolor{lightgray} & \cellcolor{lightgray} & \cellcolor{lightgray} & \cellcolor{lightgray} & \cellcolor{lightgray} & \cellcolor{lightgray} & \cellcolor{lightgray} & \cellcolor{lightgray}\\
2 & 4 & 3 & \cellcolor{lightgray} & \cellcolor{lightgray} & \cellcolor{lightgray} & \cellcolor{lightgray} & \cellcolor{lightgray} & \cellcolor{lightgray} & \cellcolor{lightgray} & \cellcolor{lightgray} & \cellcolor{lightgray}\\
3 & 8 & 6 & \textsc{ne} & 4 & \cellcolor{lightgray} & \cellcolor{lightgray} & \cellcolor{lightgray} & \cellcolor{lightgray} & \cellcolor{lightgray} & \cellcolor{lightgray} & \cellcolor{lightgray} \\
4 & \textsc{ne} & 24 & 12 & 8 & 6 & \textsc{ne} & \textsc{ne} & \cellcolor{lightgray} &  \cellcolor{lightgray} & \cellcolor{lightgray} & \cellcolor{lightgray} \\
5 & \textsc{ne} & \textsc{ne} & \textsc{ne} & \textsc{ne} & 24 & 12 & \textsc{ne} & \textsc{ne} & \textsc{ne} & \textsc{ne} & \textsc{ne}
\end{array}\]
\caption{Existence and non-existence of $(r, c)$-planar graphs, with the size of the smallest known construction listed in the case of existence.}
\label{planar_rc_values}
\end{table}%

\vspace{-5mm}

\subsection{Smallest known $(r,c)$-planar graphs}

We briefly summarise in Table \ref{planar_rc_smallest} the smallest existing $(r, c)$-planar graphs.

\begin{table}[h!]
\[\begin{array}{c|cccccc}
\tikz{\node[below left, inner sep=1pt] (def) {$r$};%
      \node[above right,inner sep=1pt] (abc) {$c$};%
      \draw (def.north west|-abc.north west) -- (def.south east-|abc.south east);}
 & 0 & 1 & 2 & 3 & 4 & 5 \\
\hline
1 & K_2 & \cellcolor{lightgray} & \cellcolor{lightgray} & \cellcolor{lightgray} & \cellcolor{lightgray} & \cellcolor{lightgray}\\
2 & C_4 & K_3 & \cellcolor{lightgray} & \cellcolor{lightgray} & \cellcolor{lightgray} & \cellcolor{lightgray}\\
3 & Q_3 & C_3 \ \square \ K_2 & \cellcolor{lightgray} & K_4 & \cellcolor{lightgray} & \cellcolor{lightgray} \\
4 & \cellcolor{lightgray} & \textrm{Figure \ref{planar_4_1}} & \textrm{Figure \ref{planar_4_2}} & 4\textrm{-antiprism} & 3\textrm{-antiprism} & \cellcolor{lightgray} \\
5 & \cellcolor{lightgray} & \cellcolor{lightgray} & \cellcolor{lightgray} & \cellcolor{lightgray} & \textrm{Figure \ref{planar_5_4}} & \textrm{Figure \ref{planar_5_5}}
\end{array}\]
\caption{Smallest known $(r,c)$-planar graphs.}
\label{planar_rc_smallest}
\end{table}

\vspace*{-4mm}

\begin{figure}[h!]
	\begin{minipage}{0.5\textwidth}
		\centering
		\includegraphics[width=0.85\textwidth]{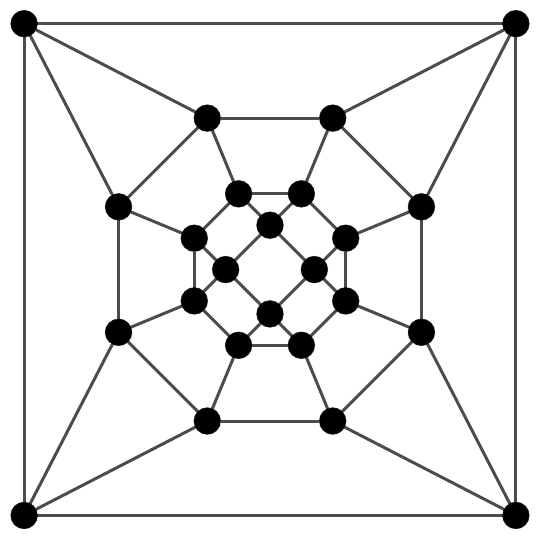}
		\caption{$(4,1)$-planar}
		\label{planar_4_1}
	\end{minipage}%
	\begin{minipage}{0.5\textwidth}
		\centering
		\includegraphics[width=0.85\textwidth]{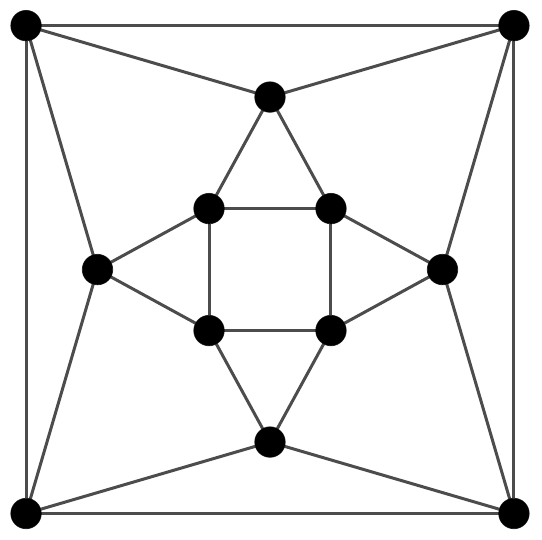}
		\caption{$(4,2)$-planar}
		\label{planar_4_2}
	\end{minipage}
\end{figure}

\begin{figure}[h!]
	\begin{minipage}{0.5\textwidth}
		\centering
		\includegraphics[width=0.85\textwidth]{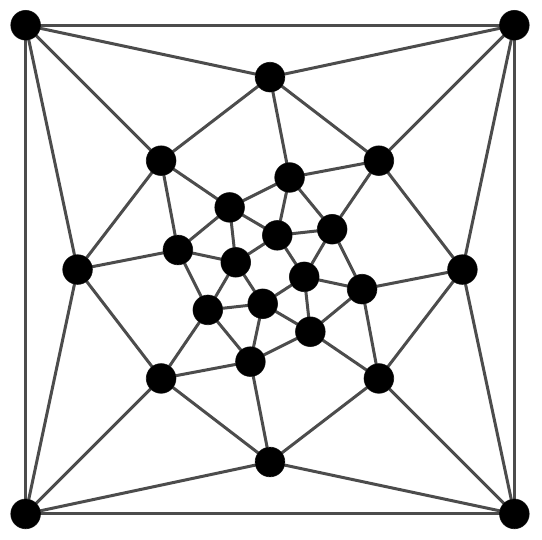}
		\caption{$(5,4)$-planar}
		\label{planar_5_4}
	\end{minipage}
	\begin{minipage}{0.5\textwidth}
		\centering
		\includegraphics[width=0.85\textwidth]{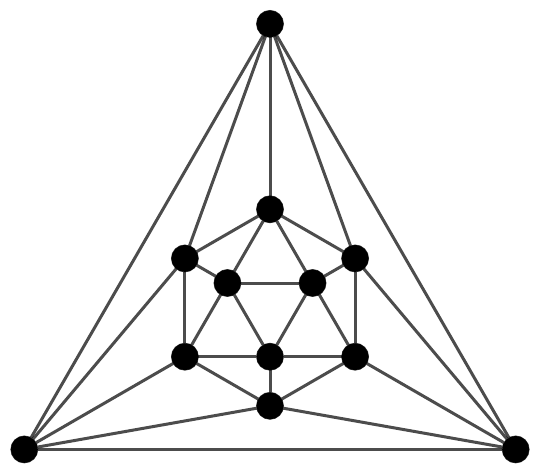}
		\caption{$(5,5)$-planar}
		\label{planar_5_5}
	\end{minipage}
\end{figure}

\subsection{Proofs for the cases of non--existence}

Several cases of non-existence are easy to prove. The cases when $(r, c)$ is $(3, 2)$, $(4, 5)$, or $(5, 9)$ are ruled out by Fact 3 in the introduction as they are of the form  $c = \binom{r}{2} - 1$ with $r \geq 3$.
 
The cases $(4, 6)$ and $(5, 10)$ are realised only by $K_5$ and $K_6$ respectively, which are non-planar. The case $(5, 8)$ (and similarly $(5,9)$ and $(5, 10)$) cannot be realised by planar graphs as for a vertex $v$, the graph induced on $N[v]$ has 6 vertices but at least 13 edges. This contradicts the fact that in a planar graph on $n$ vertices there are at most $3n - 6$ edges.
 
On the other hand, the case of $(5, 7)$ is not possible because the graph induced on $N[v]$ has 6 vertices and 12 edges and a vertex of degree 5, but none of the four graphs on 6 vertices, 12 edges and $\Delta = 5$ are planar \cite{harary}.
 
The remaining cases are treated below. We observe that proving the theorem below supplies, in particular, a proof of the non-existence case of the remaining $(r, c)$ cases.

\begin{theorem}\label{planarNE}
No planar graph $G$ exists with
\begin{enumerate}[i.]
	\item $d(G) \geq 4$  and $e(v) = 0$ for all vertices $v$   
	\item $d(G) = 5$  and $e(v) = c$ for all vertices $v$, $c \in \{1, 2, 3\}$  
	\item $d(G) \leq 5$  and $e(v) = 6$ for all vertices $v$
\end{enumerate}
\end{theorem}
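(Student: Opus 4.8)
\medskip
\noindent\textit{Approach.} All three parts combine two elementary observations. First, for every vertex $v$ the graph induced on $N(v)$ has $\deg(v)$ vertices and $e(v)$ edges, so a prescribed value $e(v)=c$ forces $\binom{\deg(v)}{2}\ge c$, hence a lower bound on $\deg(v)$. Second, counting incidences between vertices and the triangles through them gives $\sum_{v}e(v)=3\,t(G)$, where $t(G)$ denotes the number of triangles of $G$. To these I add the standard planar bookkeeping: for a connected plane graph on $n\ge 3$ vertices every face has boundary length at least $3$, so $2e\ge 3f_3+4(f-f_3)=4f-f_3$, where $f_3$ is the number of triangular faces; and for $n\ge 4$ distinct triangular faces are distinct triangles, so $f_3\le t(G)$. (Disconnectedness only strengthens the first inequality, so I will argue as if $G$ is connected.)

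\medskip
\noindent\textit{Items (i) and (ii).} If $e(v)=0$ for all $v$, then no two neighbours of a vertex are adjacent, so $G$ is triangle-free; and $d(G)\ge 4$ forces $\Delta(G)\ge 4$, hence $n\ge 5$, so the standard bound $e\le 2n-4$ for triangle-free planar graphs applies. But $d(G)\ge 4$ also gives $e=\tfrac12 d(G)\,n\ge 2n$ --- a contradiction. For (ii), $d(G)=5$ gives $e=\tfrac52 n$, and substituting this together with $f=2-n+e$ into $f_3\ge 4f-2e$ yields $f_3\ge n+8$; on the other hand $e(v)=c$ for all $v$ gives $f_3\le t(G)=\tfrac13\sum_v e(v)=\tfrac{c}{3}n\le n$ when $c\in\{1,2,3\}$, contradicting $f_3\ge n+8$.

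\medskip
\noindent\textit{Item (iii).} The plan is first to force $5$-regularity: $e(v)=6$ needs $\deg(v)\ge 4$, and if some vertex had degree exactly $4$ then $N(v)$ would induce $K_4$, so $G$ would contain $K_5$, which is impossible; hence $\delta(G)\ge 5$, and together with $d(G)\le 5$ this makes $G$ exactly $5$-regular. Now $2e=5n$, so as in (ii) one again gets $f_3\ge n+8$, while $\sum_v e(v)=6n$ gives $t(G)=2n$, so the crude bound $f_3\le t(G)=2n$ is now too weak. The structural input I would exploit is that for every $v$ the graph $G[N(v)]$ has five vertices and six edges, is outerplanar (since $G[N[v]]$ is obtained from $G[N(v)]$ by adding one vertex adjacent to all others, and such a graph is planar exactly when $G[N(v)]$ is outerplanar), and has no isolated vertex (an isolated vertex in $G[N(v)]$ would leave a $K_4$ on the remaining four neighbours, again giving $K_5$). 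These constraints leave only four possibilities for $G[N(v)]$: a $5$-cycle with one chord, the bowtie, and the two graphs obtained from $K_4$ minus an edge by attaching a pendant vertex (to a vertex of degree $2$ or of degree $3$). For each of these I would read off, from the rotation forced at $v$ in a planar embedding, how many of the five faces meeting $v$ can be triangular, and then combine these bounds through $3f_3=\sum_v a(v)$, where $a(v)$ is the number of triangular faces incident to $v$, to contradict $f_3\ge n+8$; equivalently, this can be phrased as a discharging argument on the $5$-regular plane graph with these four neighbourhood types as the only admissible local configurations.

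\medskip
\noindent\textit{Main obstacle.} Items (i) and (ii) are short Euler/double-counting arguments; essentially all of the work is in (iii). The difficulty is that the $5$-cycle-with-one-chord neighbourhood can, a priori, sit inside five triangular faces, so the naive per-vertex bound $a(v)\le 4$ fails and a genuinely global count is needed --- one must bound how many vertices can carry that configuration, using, for example, that the chord at such a vertex $v$ produces a separating triangle among $N(v)$ and that every triangle of $G$ has at most two common neighbours (otherwise some vertex $x$ of that triangle would have $e(x)\ge 7$). Making this count tight enough to force $f_3<n+8$ is the crux.
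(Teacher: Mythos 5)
Your items (i) and (ii) are correct and essentially identical to the paper's own argument: (i) is the triangle-free planar bound $e\le 2n-4$ played against $e\ge 2n$, and (ii) is Euler's formula combined with $2e\ge 3f_3+4f_{\ge 4}$ and the double count $3t(G)=\sum_v e(v)=cn$, using $f_3\le t(G)$. Nothing to add there.

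Item (iii) contains a genuine gap: what you give is a plan, not a proof. The preparatory steps are all sound --- forcing $5$-regularity, deriving $f_3\ge n+8$ from Euler, computing $t(G)=2n$, observing that $f_3\le t(G)$ is now useless, and classifying the four admissible neighbourhoods (house, bowtie, and the two diamond-plus-pendant graphs). But the step that would actually produce a contradiction, namely an upper bound on the vertex/triangular-face incidences $\sum_v a(v)=3f_3$ strong enough to beat $3(n+8)$, is explicitly left open. The local information you have set up cannot close it on its own: since the house is the only admissible neighbourhood with a spanning cycle, you get $a(v)\le 4$ for the other three types and hence $3f_3\le 4n+n_H$, where $n_H$ is the number of house-neighbourhood vertices; comparing with $3f_3\ge 3n+24$ only yields $n_H\ge 24-n$, which is vacuous for $n\ge 24$. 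So a genuinely global estimate (on $n_H$, or on the number of non-facial triangles) is still needed, and you supply no candidate for it beyond naming possible ingredients. As you acknowledge, this is the crux, and it is missing.

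For comparison, the paper's treatment of (iii) is a one-line continuation of (ii): it takes the triangle count $\frac{1}{3}\sum_v e(v)=2n$, identifies it with the number $f_3$ of triangular faces, and substitutes $f\ge f_3=2n$ into Euler's formula to obtain $e\ge 3n-2>3n-6$. Note that this direction of use amounts to $f_3\ge t(G)$, i.e.\ to asserting that every triangle bounds a face --- the reverse of the only inequality ($f_3\le t(G)$) that you, rightly, treat as automatic. Your caution on this point is exactly where your argument and the paper's diverge: the paper's route is short but rests on that identification, whereas yours avoids it at the price of an argument you have not completed. Either way, as submitted your proof of part (iii) is not done.
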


Before proceeding with the proof of Theorem \ref{planarNE}, we note the following classical results on planar graphs \cite{west}.

\begin{proposition}\label{planar_1}
	Let $f_j$ denote the number of faces on $j$ edges in a planar graph. Then $2e = \sum_{j \geq 3} j (f_j)$.
\end{proposition}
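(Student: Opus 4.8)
The plan is to prove this identity by a double-counting argument on the incidences between edges and faces of a fixed planar embedding of $G$. I would define an \emph{edge--face incidence} to be a pair $(e, F)$ in which the edge $e$ occurs on the boundary walk of the face $F$, counted with multiplicity (so an edge traversed twice by the same boundary walk contributes two incidences). Letting $I$ denote the total number of such incidences, the whole argument reduces to computing $I$ in two different ways and equating the results.

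First I would count $I$ by summing over faces. Each face $F$ is bounded by a closed boundary walk; if that walk has length $j$ --- that is, if $F$ is a face ``on $j$ edges'' --- then $F$ contributes exactly $j$ incidences to $I$. Grouping the faces according to their boundary length and invoking the definition of $f_j$ gives $I = \sum_{j \geq 3} j\, f_j$. In the simple, suitably connected setting in which the proposition is later applied, every face boundary is a cycle of length at least three, which justifies starting the sum at $j = 3$.

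Next I would count $I$ by summing over edges. The key topological fact for a planar embedding is that every edge is incident to exactly two face-sides: either it separates two distinct faces, each listing the edge once on its boundary, or it is a bridge, in which case a single face traverses it twice on its boundary walk. In both situations the edge contributes exactly $2$ to $I$, so summing over all $e$ edges yields $I = 2e$. Equating the two expressions for $I$ gives $2e = \sum_{j \geq 3} j\, f_j$, as required.

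The main obstacle --- and the only place where care is genuinely needed --- is the bridge case just noted: a naive ``each edge borders two distinct faces'' claim is false for cut-edges, and the cleanest remedy is to define the size of a face as the length of its boundary walk counted with multiplicity, so that a bridge is automatically charged twice to the unique face it bounds. With this convention both tallies are exact, and for $2$-connected simple planar graphs (where no bridges occur and every boundary is a genuine cycle) the argument specialises transparently to the stated form of the identity.
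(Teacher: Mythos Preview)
Your proof is correct and is precisely the standard double-counting (``handshake for faces'') argument. Note, however, that the paper does not actually prove Proposition~\ref{planar_1}: it is stated without proof as one of several ``classical results on planar graphs'' cited from West's textbook, so there is no in-paper proof to compare against. Your treatment of bridges via boundary-walk multiplicity is the right way to make the identity hold in full generality, and since the paper only applies the proposition to $r$-regular planar graphs with $r\ge 4$ (which are necessarily $2$-edge-connected and hence bridgeless), the subtlety you flag never arises in its subsequent use.
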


\begin{proposition}[Euler's Polyhedral Formula] \label{planar_2}
	For a planar graph, $n - e + f = 2$.\end{proposition}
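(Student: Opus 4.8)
The plan is to turn the local condition $e(v)=c$ into a global triangle count and feed it into Euler's formula. Write $t$ for the number of triangles of $G$. Since each edge of the graph induced on $N(v)$ forms a triangle with $v$, and each triangle is counted at its three vertices, $\sum_v e(v) = 3t$; so $e(v)=c$ for all $v$ forces $t = nc/3$. Next, once $n\geq 4$ every triangular face is bounded by a distinct $3$-cycle (two faces could share a $3$-cycle boundary only if $G=K_3$), giving $f_3 \le t$. Finally, combining Proposition \ref{planar_1} with the fact that every non-triangular face has at least four sides yields $2e = 3f_3 + \sum_{j\ge4} jf_j \ge 4f - f_3$, so with Proposition \ref{planar_2},
\[ f_3 \;\ge\; 4f - 2e \;=\; 2e - 4n + 8 \]
(the bound only improves if $G$ is disconnected). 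These three facts dispatch parts (i) and (ii) at once; part (iii) will need a local planarity argument followed by a topological one, and that is where the real work lies.

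For (i), $e(v)=0$ means $G$ is triangle-free, so $t=0$ and hence $f_3=0$; the displayed inequality gives $e \le 2n-4$, so $d = 2e/n \le 4 - 8/n < 4$, contradicting $d(G)\ge 4$. For (ii), $d(G)=5$ gives $e = 5n/2$, so $2e=5n$ and the inequality yields $f_3 \ge n+8$, whereas $f_3 \le t = nc/3$. Hence $n + 8 \le nc/3$, i.e. $n(c-3) \ge 24$; since $c \le 3$ the left side is non-positive, a contradiction covering $c\in\{1,2,3\}$ simultaneously.

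For (iii) I would first pin down the degrees. For any $v$, the closed neighbourhood $N[v]$ is planar on $\deg(v)+1\ge 5$ vertices with $\deg(v)+e(v)=\deg(v)+6$ edges, so $\deg(v)+6 \le 3(\deg(v)+1)-6$ forces $\deg(v)\ge 5$; together with $d(G)\le 5$ this makes $G$ exactly $5$-regular. Now let $\tau(v)$ be the number of triangular faces incident to $v$. In any embedding a triangular face occupies exactly one of the $\deg(v)$ corners at $v$, so $\tau(v)\le \deg(v)=5$. Since $e(v)=6$ counts all triangles through $v$, each vertex lies in at least one triangle ($e(v)-\tau(v)\ge 1$ of them, in fact) whose two sides are both non-empty, i.e. in a \emph{separating} triangle.

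The main obstacle is converting this into a contradiction, and I expect to do it by a minimality argument on separating triangles. Among all separating triangles choose one, say $T$, whose interior contains the fewest vertices, and pick an interior vertex $w$. Because $T$ separates the plane, every neighbour of $w$, hence every triangle through $w$, lies in the closed region bounded by $T$. By the previous step $w$ lies in a separating triangle $T_w\ne T$, and its bounded side sits inside the interior of $T$ but excludes $w$ (and, after checking, the corners of $T$), so it contains strictly fewer vertices than the interior of $T$ while remaining non-empty; this contradicts the minimality of $T$. I expect the only delicate point to be the topological bookkeeping that $\mathrm{int}(T_w)\subsetneq \mathrm{int}(T)$, in particular that no corner of $T$ is trapped inside $T_w$. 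A reassuring check is $n=12$: there $f_3\ge n+8=20=f$ forces a $5$-regular triangulation, namely the icosahedron, which has $e(v)=5\ne 6$.
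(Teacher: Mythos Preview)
Your proposal does not prove the stated proposition. Proposition~\ref{planar_2} is Euler's polyhedral formula $n-e+f=2$; the paper merely quotes it as a classical fact from \cite{west} and offers no proof. What you have actually written is a proof outline for Theorem~\ref{planarNE} (the three-part non-existence result): you explicitly \emph{invoke} Proposition~\ref{planar_2} in deriving $f_3\ge 2e-4n+8$, and the whole discussion is organised around parts (i)--(iii) of that theorem. As a response to the assigned statement there is nothing to compare---the paper has no proof of Euler's formula, and neither do you.

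If the intended target was Theorem~\ref{planarNE}, then for (i) and (ii) your route matches the paper's almost exactly: both combine Euler, the inequality $2e\ge 3f_3+4f_{\ge4}$, and the triangle count $t=cn/3$. You are slightly more careful in writing $f_3\le t$ where the paper silently writes $f_3=cn/3$ as an equality. For (iii) the paper again sets $f_3=2n$ and finishes in three lines via $d(G)\ge 6-4/n$; but that step needs the direction $f_3\ge t$, precisely what $f_3\le t$ does not give when separating triangles are present. Your longer plan---force $5$-regularity from the local planarity of $N[v]$, show every vertex lies on a non-facial triangle because $\tau(v)\le 5<6=e(v)$, then contradict a minimal separating triangle---is aimed squarely at that subtlety. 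The topological bookkeeping you flag (that $\mathrm{int}(T_w)\subsetneq\mathrm{int}(T)$ and contains no corner of $T$) is indeed the crux, but the outline is sound and would give a complete argument where the paper's short version leans on an unjustified identification of triangles with triangular faces.
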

      
\begin{proposition} \label{planar_3}
	For a planar graph with girth $g$, $e \leq \frac{g(n-2)}{g-2}$.
\end{proposition}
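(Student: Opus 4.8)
The plan is to combine the two preceding propositions — the edge–face incidence identity (Proposition \ref{planar_1}) and Euler's formula (Proposition \ref{planar_2}) — with the single extra geometric observation that the girth condition forces every face of a plane embedding to be long. First I would dispose of the degenerate case: if $G$ contains no cycle then $g$ is infinite and there is nothing to prove, so I assume $G$ has a cycle, whence $g \geq 3$ is finite and in particular $g - 2 > 0$. Consistent with the form $n - e + f = 2$ in which Euler's formula is stated, I take $G$ to be connected (the general case following from $n - e + f = 1 + k$ for a graph with $k \geq 1$ components, which still yields $2 - n + e \leq f$ and hence the same conclusion below).

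The key step is the claim that in a plane embedding of $G$ every face is bounded by at least $g$ edges; equivalently, in the notation of Proposition \ref{planar_1}, $f_j = 0$ whenever $j < g$. To establish this I would argue that the boundary walk of any face of a connected graph containing a cycle must itself contain a cycle, and that every cycle of $G$ has length at least $g$ by the definition of girth; a bridge traversed twice on a boundary walk only lengthens it, so it can never drop a face below length $g$. This is the step I expect to demand the most care: one must treat bridges and cut-vertices, where the boundary of a face is a closed walk rather than a simple cycle, and verify that such a walk nevertheless has length at least $g$.

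Granting the claim, Proposition \ref{planar_1} gives
\[
2e \;=\; \sum_{j \geq 3} j\, f_j \;=\; \sum_{j \geq g} j\, f_j \;\geq\; g \sum_{j \geq g} f_j \;=\; g f,
\]
so that $f \leq \tfrac{2e}{g}$. Substituting the value $f = 2 - n + e$ supplied by Euler's formula (Proposition \ref{planar_2}) then yields $2 - n + e \leq \tfrac{2e}{g}$.

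Finally I would rearrange this inequality: multiplying through by $g$ and collecting the terms in $e$ gives $e(g-2) \leq g(n-2)$, and since $g - 2 > 0$ I may divide to obtain $e \leq \tfrac{g(n-2)}{g-2}$, as required. The only arithmetic subtlety is the sign of $g - 2$, which is precisely why pinning down $g \geq 3$ at the outset is essential.
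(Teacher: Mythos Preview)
Your argument is the standard one and is correct: combining the face--edge incidence count $2e \geq gf$ with Euler's formula and rearranging gives the bound, after handling the acyclic case and the sign of $g-2$. Note, however, that the paper does not actually supply a proof of this proposition; it is stated as one of three ``classical results on planar graphs'' cited from West's textbook and then used as a black box in the proof of Theorem~\ref{planarNE}. So there is no in-paper proof to compare against, and your write-up is exactly the textbook derivation one would expect to fill this citation.
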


\begin{proof}[Proof of Theorem \ref{planarNE}]
	Suppose that $G$ is a planar graph with $d(G) \geq 4$ and $e(v) = 0$ for all vertices $v$. Then $G$ must be triangle free and hence $g \geq 4$. By Proposition \ref{planar_3}, $e \leq \frac{4(n-2)}{2} = 2n-4$ and therefore $d(G) \leq \frac{2e}{n} \leq \frac{4n-8}{n} < 4$ which is a contradiction. Hence (i) follows.
	
	Now suppose that $d(G) = 5$ and $e(v) = c$ for all vertices $v$, $c \in \{1, 2, 3\}$. Let $f = f_3 + f_{\geq 4}$, where $f_{\geq 4}$ denotes the number of faces of on $\geq 4$ edges. We get by Proposition \ref{planar_1} that $2e \geq  3f_3 +4f_{\geq 4}$. Consider the bipartite graph having partite sets $A$ and $B$, where $A$ is the set of vertices of $G$ and $B$ is the set of triangles in $G$. Every vertex in $A$ is adjacent to $c$ vertices in $B$, and every vertex in $B$ is adjacent to three vertices in $A$. Therefore counting the number of edges between $A$ and $B$, we get that $c n = c |A| = 3 |B| = 3 f_3$. Hence $f_3 = \frac{c n}{3}$.
	
	Now from Euler's Polyhedral Formula we have that
	\begin{align*}
		8 &= 4(n - e + f)& \\
		&= 4(n-e) + f_3 + (3f_3 + 4f_{\geq 4})& \\
		&\leq 4(n-e) + \frac{cn}{3} + 2e& \because f_3 = \frac{c n}{3},  2e \geq  3f_3 +4f_{\geq 4} \\
		&=\frac{(12 + c)n}{3} -2e &
	\end{align*}
	and hence $(12 + c)n \geq 6e + 24 \geq 3n d(G) + 24$. Re-arranging, we get $d(G) \leq 4 + \frac{c}{3} - \frac{8}{n}$. Consequently, since $1 \leq c \leq 3$, we have that $d(G) < 5$ which is a contradiction. Therefore (ii) follows.
	
	Lastly, suppose that $d(G) \leq 5$  and $e(v) = 6$ for all vertices $v$. By a similar argument for $f_3$ in (ii), we obtain that $f_3 = 2n$ in this case. Therefore $$2 = n - e + f = n - e + f_3 + f_{\geq 4} \geq 3n - e$$
	and hence $6n \leq 2e + 4 \leq n d(G) + 4$. Re-arranging, we get $d(G) \geq 6 - \frac{4}{n} > 5$, which is a contradiction. Hence (iii) follows, completing the proof.
\end{proof}
 
\section{Existence and construction of $(r,c)$-circulant graphs}

Recall that every circulant is an $(r,c)$-graph, hence by $(r,c)$-circulant we mean a circulant graph which is $r$-regular such that every vertex $v \in V$ satisfies $e(v) = c$. 

We have seen in \cite{mifsud2024} that suitable coloured $(r,c)$-circulants were useful to obtain small $2$-flip graphs. Also, extensive computer searches for small $(r,c)$-graphs reveals that in many cases the smallest order of an $(r,c)$-graph is realised by a circulant.

Another motivation is that, parallel to the determination of $\mathsf{spec}(r)$ and $\mathit{spec}(c)$, it is of interest to consider these spectrums restricted to circulant, namely $\mathsf{spec}_{\vert\mathrm{circ}} (r)$ and $\mathit{spec}_{\vert\mathrm{circ}} (c)$. 


\subsection{Existence of $(r,c)$-circulants}

Consider the circulant $\mathsf{Circ}(n, S)$ where $S$ is the set of
\textit{jumps} $i \in S$ where $1 \leq i \leq \frac{n}{2}$. Then this
circulant corresponds to the Cayley graph $\cay{\mathbb{Z}_n}{S
\cup -S}$. We shall study the open neighbourhood of a vertex in
$\mathsf{Circ}(n, S)$. 

Our first result is that, somewhat unexpectedly, there exists no $(r,c)$-circulants for $c \equiv \mod{2}{3}$ and hence for such values of $c$ we have $\mathit{spec}_{\vert\mathrm{circ}} (c) = \emptyset$. A warm-up for this result is the following simple proposition.

\begin{proposition}
	If $G$ is an $(r,c)$-graph on $n \equiv \mod{1, 2}{3}$ vertices, then $c \equiv \mod{0}{3}$.
\end{proposition}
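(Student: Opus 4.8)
The plan is to use a standard double-counting argument on the triangles of $G$. The key observation is that, for any vertex $v$, the quantity $e(v)$ — the number of edges in the subgraph induced by $N(v)$ — is exactly the number of triangles of $G$ containing $v$, since each such edge $xy$ with $x,y \in N(v)$ yields a triangle $vxy$, and conversely every triangle through $v$ arises this way.

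First I would let $t(G)$ denote the total number of triangles in $G$ and count incidences between vertices and triangles: summing $e(v)$ over all $v \in V(G)$ counts each triangle exactly three times (once for each of its vertices), so $\sum_{v \in V(G)} e(v) = 3\, t(G)$. Since $G$ is an $(r,c)$-graph, $e(v) = c$ for every vertex $v$, and hence the left-hand side equals $nc$, giving the identity $nc = 3\, t(G)$.

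Finally I would conclude that $3 \mid nc$. Since $n \equiv 1$ or $2 \pmod 3$, we have $\gcd(n,3) = 1$, and therefore $3 \mid c$, i.e.\ $c \equiv \mod{0}{3}$, as required.

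There is essentially no hard step here: the only thing to be careful about is the clean identification of $e(v)$ with the number of triangles through $v$ (which is immediate from the definitions in the paper), and noting that the regularity parameter $r$ plays no role in the argument — only the constancy of $e(v)$ and the divisibility of $n$ matter. This proposition will also serve as the conceptual warm-up for the circulant result, where a finer parity/residue analysis of the neighbourhood structure replaces this simple global count.
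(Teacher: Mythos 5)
Your proof is correct and follows essentially the same route as the paper's: a double count of vertex--triangle incidences giving $nc = 3\,t(G)$, followed by the observation that $\gcd(n,3)=1$ forces $3 \mid c$. No differences worth noting.
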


\begin{proof}
Let $t(G)$ denote the number of triangles in $G$. Every vertex $v$ has $e(v) = c$ and hence $v$ is a vertex on exactly $c$ triangles. 	Counting (with multiplicities) over all vertices, we get $cn$ triangles. On the other hand, every triangle is counted this way three times, hence this double counting gives $t(G) = \frac{cn}{3}$. But as $n \equiv \mod{1, 2}{3}$ it follows that $c \equiv \mod{0}{3}$. 
\end{proof}

The case when $n \equiv \mod{0}{3}$ is summarised in the following theorem and requires a more involved argument. 

\begin{theorem}\label{circ_existence}
	For any vertex $v$ in $\mathsf{Circ}(n, S)$, $e(v) \equiv \mod{0}{3}$ except when $n \equiv \mod{0}{3}$ and $\frac{n}{3} \in S$, in which case $e(v) = 1$.	
\end{theorem}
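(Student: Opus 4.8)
\textbf{Proof plan for Theorem \ref{circ_existence}.}

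The plan is to fix a vertex, say $0 \in \mathbb{Z}_n$, and analyse exactly which pairs of neighbours of $0$ are themselves adjacent in $\mathsf{Circ}(n,S)$. By vertex-transitivity of circulants it suffices to compute $e(0)$. The open neighbourhood of $0$ is $N(0) = (S \cup -S)$, and two elements $a, b \in N(0)$ are adjacent precisely when $a - b \in \pm(S\cup -S)$, i.e.\ when $\pm a \pm b \pmod n$ lands in $S\cup -S$. The key observation is that the "triangle-generating" relations among the jumps come in two flavours: \emph{internal} relations of the form $i + j \equiv k$ or $i - j \equiv k$ with $i,j,k\in S$ (or more precisely $i,j,k$ representing elements of $S\cup -S$ up to sign), and the \emph{exceptional} relation coming from an element of order dividing $3$. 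I would organise the count so that each ordered triple of distinct jumps satisfying an internal additive relation contributes a fixed multiple of triangles through $0$, forcing the internal contribution to $e(0)$ to be divisible by $3$; the remainder, if any, must come from a degenerate configuration.

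The heart of the argument is to show that the only way to get a triangle through $0$ that is \emph{not} accounted for in multiples of three is via a jump $s$ with $2s \equiv 0$ or $3s \equiv 0 \pmod n$. The case $2s \equiv 0$ (i.e.\ $s = n/2 \in S$) does not create a triangle at $0$ on its own — the vertices $0$, $s$, $-s=s$ coincide — so it contributes nothing anomalous. The genuinely special case is $3s \equiv 0$, i.e.\ $n \equiv 0 \pmod 3$ and $s = n/3 \in S$: then $0$, $n/3$, $2n/3 = -n/3$ form a triangle, and this triangle is "its own rotation" — the cyclic group generated by adding $n/3$ permutes its three vertices, so it gets counted with multiplicity that breaks the clean divisibility-by-$3$ bookkeeping, contributing exactly $1$ to $e(0)$ over and above a multiple of $3$. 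So the plan is: (a) set up the bijection between triangles at $0$ and solutions of $\varepsilon_1 a + \varepsilon_2 b + \varepsilon_3 c \equiv 0$ with $a,b,c \in S$, $\varepsilon_i \in \{\pm 1\}$, being careful about when $a,b,c$ are forced to be distinct; (b) show the "generic" solutions (those involving three distinct jumps, or jumps $s$ with $3s \not\equiv 0$) partition into orbits of size $3$ under a natural $\mathbb{Z}_3$-action, hence contribute $0 \bmod 3$ to $e(0)$; (c) isolate the single exceptional orbit, present exactly when $n/3 \in S$, and check by direct inspection that it adds $1$.

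The main obstacle I anticipate is the careful case analysis in step (a)–(b): one must avoid double-counting and must correctly handle the coincidences $a = b$, $a = -b$, $a = n/2$, and $a = n/3$, since each of these collapses a would-be triangle or changes its orbit size. In particular one needs to argue that configurations with a repeated jump (other than the $n/3$ case) either fail to form a triangle or still fall into size-$3$ orbits, and that no element of $S\cup -S$ can simultaneously be $n/2$ and $n/3$. A clean way to package steps (b) and (c) is to define, on the set $T_0$ of triangles containing $0$, the map that rotates a triangle $\{0, x, y\}$ to $\{0, y-x, -x\}$ (translation by $-x$ followed by relabelling so that $0$ is again a vertex), verify this is a well-defined $\mathbb{Z}_3$-action on $T_0$, and note its fixed points are exactly triangles invariant under translation by their own vertices — which forces $3x \equiv 0$ and hence $x = n/3$. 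Then $|T_0| \equiv (\text{number of fixed points}) \pmod 3$, and the number of fixed points is $1$ if $n/3 \in S$ and $0$ otherwise, giving $e(0) = |T_0| \equiv 0$ or $1 \pmod 3$ as claimed; combined with a direct check that in the exceptional case $e(v)$ is \emph{exactly} $1$ (not merely $\equiv 1$) — which follows because $n/3$ lies in $S$ at most once and generates a unique triangle at $0$ — this completes the proof.
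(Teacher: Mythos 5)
Your core argument is sound and takes a genuinely different, and in fact slicker, route than the paper. The paper works directly with the edges of the subgraph induced on $N(0)$: it introduces a canonical representation $\{x,x+y\}$ of each edge, defines the orbit $\langle e\rangle$ as the set of edges supported on $\{x,-x,y,-y,x+y,-x-y\}$, and then runs a six-way case analysis showing that these orbits partition the edge set into classes of size $6$, $3$ or $1$, with a size-$1$ class occurring only for the edge $\left\{\frac{n}{3},-\frac{n}{3}\right\}$. You instead identify $e(0)$ with the number of triangles through $0$ and partition those triangles into translation classes $\{T,\,T-a,\,T-b\}$ for $T=\{0,a,b\}$. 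If two of these three translates coincide, then $T$ is stabilised by a nonzero translation, hence is a union of cosets of a nontrivial subgroup, which for a $3$-element set forces $T=\left\{0,\frac{n}{3},\frac{2n}{3}\right\}$ (and then all three translates coincide); so every class has size $3$ except possibly this one fixed class of size $1$, present exactly when $\frac{n}{3}\in S$. This collapses the paper's case analysis into a single fixed-point computation and handles the involution $\frac{n}{2}$ and the coincidences $a=\pm b$ uniformly. One technical wrinkle: the map $\{0,x,y\}\mapsto\{0,y-x,-x\}$ you propose as the generator of the $\mathbb{Z}_3$-action is \emph{not} well defined on unordered triangles (swapping the roles of $x$ and $y$ yields $\{0,x-y,-y\}$, a different triangle in general), so you should phrase step (b) as the statement that ``being a translate of'' is an equivalence relation on the set of triangles through $0$, with the class sizes as above, rather than as iteration of a single map; your fixed-point analysis then goes through verbatim.

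The one genuine error is your closing claim that in the exceptional case $e(v)$ equals $1$ \emph{exactly}. That is false: in $\mathsf{Circ}(12,\{1,3,4,6\})$, the paper's own example in Figure \ref{c_mod_3}, we have $\frac{n}{3}=4\in S$ and $e(0)=10$, and Lemma \ref{circulant_nhood_lemma} gives $e(v)=1+3\binom{k-1}{2}+3(k-1-j)$ in the exceptional case. The theorem's wording ``$e(v)=1$'' must be read as $e(v)\equiv \mod{1}{3}$; this congruence is all the paper's proof establishes and all that is used downstream. Your justification --- that $\frac{n}{3}$ ``generates a unique triangle at $0$'' --- only shows that the exceptional class contributes exactly one triangle; it says nothing about the remaining translation classes, each of which contributes $3$ more. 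Drop the ``direct check that $e(v)$ is exactly $1$'' and stop at the congruence; your argument already proves the intended statement.
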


As a consequence of this theorem, it follows that there exists no $(r,c)$-circulant such that $c \equiv \mod{2}{3}$. Before proceeding with the proof of this theorem, we require the introduction of a canonical way of writing edges in a circulant. Observe that any edge in $\mathsf{Circ}(n, S)$ can be written as $\{ x, x + y \}$ for some $x \in S \cup - S$ and $y \in S$, where in the case that $y$ is an involution we require that $x \in S$. Let this be a canonical way of writing the edges in a circulant. We show that any edge is uniquely expressed in this manner.

\begin{lemma}
  Every edge $e$ in $\mathsf{Circ}(n, S)$ has a unique canonical representation.
\end{lemma}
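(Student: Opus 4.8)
The statement asserts that every edge of $\mathsf{Circ}(n, S)$ can be written as $\{x, x+y\}$ with $x \in S \cup -S$ and $y \in S$ (and $x \in S$ when $y$ is an involution), and that this expression is unique. My plan is to first dispose of the existence part quickly — any edge has the form $\{a, b\}$ with $b - a \in S \cup -S$; if $b - a = y \in S$ set $x = a$, otherwise $b - a = -y$ for some $y \in S$ and set $x = b$. When $y$ is an involution both choices of $x$ give the same $y$, so fixing $x \in S$ (rather than $-S$) is a legitimate tie-break. The real content is uniqueness, so I would spend the bulk of the argument there.

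For uniqueness, suppose $\{x, x+y\} = \{x', x'+y'\}$ with both pairs in canonical form. There are two cases. In the first, $x = x'$ and $x+y = x'+y'$, which forces $y = y'$ immediately — done. In the second, $x = x'+y'$ and $x+y = x'$; adding these gives $y' = -y$, so $y$ and $-y$ both lie in $S$. Since $S$ consists of jumps $i$ with $1 \le i \le n/2$, having both $y$ and $-y \equiv n - y$ in $S$ is only possible when $y = n - y$, i.e. $y$ is an involution ($n$ even, $y = n/2$). But then $y = y'$, and the canonical form requires $x \in S$ and $x' \in S$; from $x = x' + y' = x' + n/2$ we would need two elements of $S$ differing by $n/2$, which is again impossible by the range restriction on $S$ unless they coincide — and if $x = x'$ we are back in the first case. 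So the second case collapses, giving uniqueness.

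The one point requiring care — and the step I expect to be the main obstacle — is keeping the bookkeeping around involutions and the asymmetry between $S$ and $-S$ straight: the convention "if $y$ is an involution we require $x \in S$" is exactly what rules out the degenerate collision, so I must make sure the case analysis invokes it at the right moment and does not secretly assume $x \ne x'$. A clean way to organize this is to reduce everything to the observation that, because $1 \le i \le n/2$ for all $i \in S$, no two distinct elements of $S$ can differ by $n/2$, and no non-involution $y \in S$ has $-y \in S$. With those two facts isolated as a sub-observation, both cases above close mechanically. I would present the proof in that order: existence (one line), the sub-observation about $S$, then the two-case uniqueness argument.
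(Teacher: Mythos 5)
Your proposal is correct and follows essentially the same route as the paper: assume two distinct canonical representations, observe that the nontrivial case forces $y' = -y$ so that $y \in S \cap -S$, use the range restriction $1 \le i \le n/2$ on the jumps to conclude $y$ is an involution, and then invoke the convention $x \in S$ for involutions to derive a contradiction. Your write-up is in fact slightly more explicit than the paper's about where the range restriction on $S$ is used, but there is no substantive difference.
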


\begin{proof}
  Consider an edge $e$ in $\mathsf{Circ}(n, S)$. Let $x, a \in S \cup - S$ and $y, b \in S$ such that $e = \{ x, x + y\} = \{ a, a + b \}$ are two distinct canonical representations of $e$. Then it must be that $a = x + y$ and therefore $b = - y$, as otherwise if $a = x$ then they are not distinct canonical representations.
  
  Since $y \in S$ then $- y \in - S$, but $- y = b \in S$. Hence $y$ must be
  an involution and therefore $b = y$. But in this case, either $x \notin S$
  or $a \notin S$ and one of these representations is not canonical. Therefore $e$ has a unique canonical representation. 
\end{proof}

Consider a canonically represented edge $e = \{ x, x + y \}$. The \textit{orbit} $\langle e \rangle$ of an edge $e$ is the set of edges $\{ a, b \}$ such that $\{ a, b, b - a \} \subseteq \{ x, - x, y, - y, x + y, - x - y\}$. This corresponds to \textit{at most} 6 different edges: $\{x, x + y \}, \{ y, x + y \}, \{ x, - y \}, \{ y, - x \}, \{ - x, - x - y \}$ and $\{ - y, - x - y \}$. We note the following useful lemma.

\begin{lemma}\label{canonical_eq_classes}
  Let $e_1, e_2$ be two edges in $\mathsf{Circ}(n, S)$. If $e_1 \in \langle e_2 \rangle$ then $\langle e_1 \rangle = \langle e_2 \rangle$.
\end{lemma}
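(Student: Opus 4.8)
The plan is to show that being in the same orbit is an equivalence relation on the edge set, by establishing that $e_1 \in \langle e_2 \rangle$ implies $\langle e_1 \rangle \subseteq \langle e_2 \rangle$; the reverse inclusion then follows by symmetry, since $e_1 \in \langle e_2 \rangle$ is in fact a symmetric relation (as we will also check). Write $e_2$ in canonical form as $e_2 = \{x, x+y\}$, so that the generating set is $T_2 := \{x, -x, y, -y, x+y, -x-y\}$, and $\langle e_2 \rangle$ consists of all edges $\{a,b\}$ with $\{a, b, b-a\} \subseteq T_2$. The key structural observation is that $T_2$ is precisely the set of ``differences'' arising among the three elements $0$ (implicitly), $x$, $x+y$ together with their negations — more usefully, $T_2$ is closed under negation and has the property that for any two of its elements whose difference also lies in $T_2$, that difference is again controlled by the same set.

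First I would make this precise: list the (at most six) edges in $\langle e_2 \rangle$ explicitly as in the paragraph preceding the lemma, namely $\{x,x+y\}, \{y,x+y\}, \{x,-y\}, \{y,-x\}, \{-x,-x-y\}, \{-y,-x-y\}$, and observe that each of these, when put into canonical form, generates (via its own triple of values and their negations) exactly the same set $T_2$. Concretely, I would check that for each such edge $\{a,b\}$ the set $\{a,-a,b,-b,a-b,b-a\}$ equals $T_2$; this is a finite case check exploiting the identities $-(x+y) = (-x)+(-y)$, $(x+y)-y = x$, $(x+y)-x = y$, and so on. Since the orbit of an edge depends only on this generated set, it follows that $\langle\{a,b\}\rangle = \langle e_2 \rangle$ for every $\{a,b\} \in \langle e_2 \rangle$. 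Given an arbitrary $e_1 \in \langle e_2 \rangle$, writing $e_1$ in canonical form $e_1 = \{x', x'+y'\}$ we have $\{x',x'+y'\} \in \langle e_2 \rangle$, hence by the case check $\langle e_1 \rangle = \langle e_2 \rangle$, which is exactly the claim.

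There is one point needing care: the orbit as defined via the condition ``$\{a,b,b-a\} \subseteq T_2$'' could in principle capture edges not on the displayed list of six, if $T_2$ has fewer than six elements (coincidences among $x, -x, y, -y, x+y, -x-y$ forced by the arithmetic of $\mathbb{Z}_n$, e.g. $n$ small or $y$ an involution). The clean way to handle this is to note that $\langle e_2 \rangle$ is by definition $\{\, \{a,b\} : \{a,b,b-a\}\subseteq T_2 \,\}$, and then simply verify directly from this definition that $\langle e_1 \rangle = \langle e_2 \rangle$ whenever $T_1 = T_2$, where $T_i$ is the six-element (multi)set attached to $e_i$; so the real content is the claim that $e_1 \in \langle e_2 \rangle$ forces $T_1 = T_2$ as \emph{sets}. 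I expect this — showing the generated set is genuinely invariant along the orbit, including in the degenerate cases — to be the main (though still elementary) obstacle; everything else is bookkeeping with the six listed edges.
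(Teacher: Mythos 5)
Your first argument --- listing the (at most) six edges $\{x,x+y\}$, $\{y,x+y\}$, $\{x,-y\}$, $\{y,-x\}$, $\{-x,-x-y\}$, $\{-y,-x-y\}$ and checking that each of them, once put into canonical form, regenerates the same set $T_2=\{x,-x,y,-y,x+y,-x-y\}$ and hence the same orbit --- is essentially the paper's proof, which observes that the triple attached to $e_1$ together with its inverses induces the same subgraph as $x,y,x+y$ and their inverses. That part is fine.

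The difficulty lies in the issue you flag at the end, and your proposed repair does not work. Under the literal set-builder reading of the definition, the claim you reduce everything to --- that $e_1\in\langle e_2\rangle$ forces $T_1=T_2$ --- is false: membership only gives $T_1\subseteq T_2$ (since $T_2$ is closed under negation and contains the two endpoints and their difference), and the containment can be strict. Concretely, take $S=\{1,2,3\}$ in $\mathbb{Z}_n$ with $n$ large. The edge $e_2=\{1,3\}$ has canonical form $x=1$, $y=2$, so $T_2=\{\pm 1,\pm 2,\pm 3\}$; the edge $e_1=\{1,2\}$ satisfies $\{1,2,2-1\}\subseteq T_2$, so $e_1\in\langle e_2\rangle$ under the set-builder condition, yet its canonical form is $x=y=1$, giving $T_1=\{\pm 1,\pm 2\}\subsetneq T_2$ and $\langle e_1\rangle=\bigl\{\{1,2\},\{1,-1\},\{-1,-2\}\bigr\}$, which does not contain $\{1,3\}$. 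So on that reading the lemma itself fails and no proof can close the gap. The statement (and the partition into classes of sizes $6$, $3$ and $1$ needed for Theorem 3.3 --- note that in this example $e(0)=9=6+3$) only holds if $\langle e\rangle$ is taken to be \emph{exactly} the six listed edges, not every edge whose endpoints and difference happen to lie in $T_2$. With that reading your finite check suffices and the degeneracy worry evaporates, because the six listed edges always have generating triples that are signed permutations of $x,y,x+y$; but the ``clean way'' you propose, verifying $T_1=T_2$ directly from the set-builder definition, cannot be carried out.
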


\begin{proof}
  Let $e_1$ and $e_2$ have the canonical representations $\{ x, x + y \}$ and $\{ a, a + b \}$, respectively, for $x, a \in S \cup - S$ and $y, b \in S$. Since $e_1 \in \langle e_2 \rangle$ then $a, b, a + b \in \{ x, - x, y, - y, x + y, - x - y \}$. Then, considering the subgraph induced between $a, b$ and $a + b$ and their inverses, we get that this is the same as the subgraph induced on $x, y, x + y$ and their inverses. Since $\langle e_1 \rangle$ and $\langle e_2 \rangle$ are defined as the edges in these respective subgraphs, it follows that $\langle e_1 \rangle = \langle e_2 \rangle$.
\end{proof}

This leads to a partition into equivalence classes of the edges in the open neighbourhood of a vertex. We exemplify this for the subgraph induced by
$N (0)$ in $\mathsf{Circ}(12, \{ 1, 3, 4, 6\})$ illustrated in Figure \ref{c_mod_3}.

\begin{figure}[h!]
	\centering
		\includegraphics[width=0.3\textwidth]{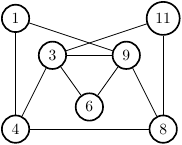}
		\caption{Subgraph induced by $N (0)$ in $\mathsf{Circ}(12, \{ 1, 3, 4, 6\})$.}
		\label{c_mod_3}
\end{figure}

The edges in this case are partitioned as follows:
\begin{enumerate}
  \item ${\langle}\{1,4\}{\rangle} = \{\{1,4\}, \{3,4\}, \{1,9\}, \{3,11\},
  \{8,11\}, \{8,9\}\}$ since $\{ 1, 4 \}$ has the canonical representation $\{ x, x
  + y \}$ with $x = 1$, $y = 3$ and $x + y$ = 4, and therefore $- x = 11, - y
  = 9$ and $- x - y = 8$.
  
  \item $\langle \{ 3, 6 \} \rangle = \{ \{ 3, 6 \}, \{ 3, 9 \}, \{ 6, 9 \}
  \}$ since $\{ 3, 6 \}$ has the canonical representation $\{ x, x + y \}$ with $x
  = y = 3$ and $x + y = 6$, and therefore $- x = - y = 9$ and $- x - y = 6$.
  
  \item $\langle \{ 4, 8 \} \rangle = \{ \{ 4, 8 \} \}$ since $\{ 4, 8 \}$
  has the canonical representation $\{ x, x + y \}$ with $x = y = 4$ and $x + y =
  8$, and therefore $- x = - y = 8$ and $- x - y = 4$.
\end{enumerate}

We are now in a position to prove our main result for this section. 

\begin{proof}[Proof of Theorem \ref{circ_existence}]
	Realising that a circulant $\mathsf{Circ}(n, S)$ is the Cayley graph $\cay{\mathbb{Z}_n}{S \cup - S}$, by vertex-transitivity we will only consider $e(0)$.
  
  	Consider then the edge $\{ x, x + y \}$ where $x \in S \cup - S$ and $y \in S$, where $x \neq - y$. Six cases may arise:
  	\begin{enumerate}[i.]
    	\item If $x \neq y$, $x$ and $y$ not involutions, $2 x \neq - y$ and $2 y \neq - x$, then $x, y, - x, - y, x + y$ and $- x - y$ are all distinct neighbours of 0 with the following distinct edges in $\langle \{ x, x + y\} \rangle$: $\{ x, x + y \}, \{ y, x + y \}, \{ x, - y \}, \{ y, - x \}, \{ - x, - x - y \}$ and $\{ - y, - x - y \}$.
    
    	\item If $x \neq y, x$ and $y$ not involutions and $2 x = - y$, then $x, y, - x, - y$ are all distinct neighbours of $0$ with the following distinct edges in $\langle \{ x, x + y \} \rangle$: $\{ x, - x \}, \{ y, -x \}$ and $\{ x, - y \}$. The case when $2 y = - x$ follows similarly; note that it cannot be that $2 x = - y$ and $2 y = - x$, as otherwise $x = y = 0$.
    
    	\item If $x \neq y$, $x$ is an involution and $2 y \neq x$, then $x, y, -y, x + y$ and $x - y$ are all distinct neighbours of $0$ with the following distinct edges in $\langle \{ x, x + y \} \rangle$: $\{ x, x + y\}, \{ y, x + y \}, \{ x, - y \}, \{ y, x \}, \{ x, x - y \}$ and $\{ - y, x - y \}$. The case when $y$ is an involution and $2 x \neq y$ follows similarly.
    
    	\item If $x \neq y$, $x$ is an involution and $2 y = x$, then $x, y$, and $-y$ are all distinct neighbours of $0$ with the following edges in $\langle \{ x, x + y \} \rangle$: $\{ x, - y \}, \{ y, x\}$ and $\{ y, - y \} .$ The case when $y$ is an involution and $2 x = y$ follows similarly.
    
    	\item If $x = y$ and $x \neq \frac{n}{3}$ then $x, - x$, $2 x$ and $- 2 x$ are distinct neighbours of 0 with three distinct edges in $\langle \{ x, x+ y \} \rangle$: $\{ x, 2 x \}$, $\{ - x, - 2 x \}$ and $\{ x, - x \}$.
    
    	\item If $x = y$ and $x = \frac{n}{3}$ then $2 x = - x$ and $\{ x, - x \}$ is the only edge in $\langle \{ x, x + y \} \rangle$.
  	\end{enumerate}
  
  	By Lemma \ref{canonical_eq_classes} and the cases above, it follows that the subgraph induced by the open neighbourhood $N (0)$ has an edge set which can be partitioned into sets of size 6, 3 or 1. Observe that there is \textit{exactly} one case leading to a partition of size 1, namely when we $\mod{n}{3} = 0$, $\frac{n}{3} \in S$ and therefore $\left\{ \frac{n}{3}, - \frac{n}{3} \right\}$is an edge in the open neighbourhood of $0$. The result follows.
\end{proof}

We now show that for the remaining cases when $c \equiv \mod{0, 1}{3}$, $c > 0$ and $r \geq 6 + \sqrt{\frac{8c - 5}{3}}$, then an $(r,c)$-circulant exists. In the case when $c = 0$, we show that $(r,0)$-circulants exist for $r \geq 1$. We first require the following notation and lemma. 

Let $k, j, l$ be integers such that $k > j \geq 0$ and $l \geq 0$. Define the set $S_{k,j} = \{1, \dots, k - 1, k + j\}$. Let $R_{k,j,l}$ be the set containing the first $l$ terms of the arithmetic progression starting at $2(k+j)+1$ and with different $d = k + j + 1$. Define the set $S_{k,j,l} = S_{k,j} \cup R_{k,j,l}$. For convenience, also define $S_{k,j,0} = S_{k,j}$. 

\begin{lemma} \label{circulant_nhood_lemma}
	Let $n, k, j, l$ be integers such that $n > k > j \geq 0$ and $l \geq 0$. Then, for every vertex $v$ in $\mathsf{Circ}\left(n, S_{k, j, l}\right)$,
	\begin{enumerate}[i.]
		\item If $l = 0$ and $n = 3k$, $e(v) = 1 + 3 \binom{k-1}{2} + 3(k-1-j)$.
		\item Otherwise, given $n$ sufficiently large, $e(v) = 3 \binom{k-1}{2} + 3(k-1-j)$.
	\end{enumerate}
\end{lemma}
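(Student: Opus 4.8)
The plan is to fix a vertex and, by vertex-transitivity of the circulant, work only with $v=0$, so that $N(0) = S_{k,j,l} \cup -S_{k,j,l}$ and $e(0)$ counts edges of $\mathsf{Circ}(n, S_{k,j,l})$ with both endpoints in this set. For $n$ sufficiently large the set $S_{k,j,l}\cup -S_{k,j,l}$, viewed inside $\mathbb{Z}_n$, behaves like the corresponding set of integers (no unexpected wrap-around coincidences), so I would first argue that for large $n$ the only edges inside $N(0)$ come from differences that lie in the jump set; the contributions of $R_{k,j,l}$ are negligible because the terms of that arithmetic progression are spaced more than $2(k+j)$ apart, so no two of them — and none of them together with an element of $\pm S_{k,j}$ — differ by an element of the jump set. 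This reduces the count to edges within $S_{k,j}\cup -S_{k,j} = \{\pm 1,\dots,\pm(k-1), \pm(k+j)\}$.

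Next I would enumerate the edges within that small symmetric set directly. An edge $\{a,b\}$ with $a,b \in \{\pm1,\dots,\pm(k-1),\pm(k+j)\}$ requires $|a-b| \in S_{k,j,l}$, and for large $n$ that forces $|a-b| \in \{1,\dots,k-1\}$ (the value $k+j$ is too large to be a difference of two elements of this set unless one endpoint is $\pm(k+j)$ paired with something of the opposite sign in the right range). I would split the count into: (a) edges with both endpoints among $\{\pm1,\dots,\pm(k-1)\}$, and (b) edges with (exactly) one endpoint $\pm(k+j)$. For (a), the consecutive-integer block $\{1,\dots,k-1\}$ together with its negatives and the gap around $0$ contributes, after a short computation, $3\binom{k-1}{2}$ edges — morally two copies of the $\binom{k-1}{2}$ edges inside a $(k-1)$-term interval under a "difference in $\{1,\dots,k-1\}$" rule, plus the cross edges between the positive and negative blocks near $0$, which reorganise into a third such batch. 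For (b), $k+j$ is adjacent to $x$ iff $(k+j)-x \in \{1,\dots,k-1\}$, i.e. $x \in \{j+1,\dots,k+j-1\}$, which within our set means $x \in \{j+1,\dots,k-1\}$ (that is $k-1-j$ choices), plus possibly one more from $-(k+j)$ via $x = k+j-(k+j)$-type wrap which I would check contributes the symmetric $k-1-j$; doubling for $\pm(k+j)$ gives $3(k-1-j)$ after accounting for the edge between $k+j$ and its negative when $n=3k$. This yields case (ii).

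For case (i), $n = 3k$ and $l=0$, the single new phenomenon is exactly the one isolated in Theorem \ref{circ_existence}: since $3k \equiv 0 \pmod 3$ and $k = \frac{n}{3}$, but wait — here the relevant jump would be $k$, which is \emph{not} in $S_{k,0}=\{1,\dots,k-1,k\}$... I would instead observe that with $j=0$ we have $k+j = k = \frac{n}{3}$, so $k \in S_{k,0}$ and $\{k,-k\} = \{k,2k\}$ is an edge of the circulant lying in $N(0)$; this is the unique orbit of size $1$ from the proof of Theorem \ref{circ_existence}, accounting for the extra $+1$. So in case (i) one repeats the count of case (ii) with $j=0$ (giving $3\binom{k-1}{2} + 3(k-1)$) and adds the single edge $\{k,2k\}$, for $1 + 3\binom{k-1}{2} + 3(k-1-0)$ as claimed — I must double-check that in the $n=3k$ setting the edge $\{k,2k\}$ was not already double-counted in part (b) above, and that no other wrap-around edges appear, which is where "$n$ sufficiently large" is replaced by the exact modulus $3k$ and the bookkeeping is most delicate.

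\textbf{Main obstacle.} The routine part is the two small-set edge counts; the delicate part is the wrap-around analysis. In case (ii) I must pin down exactly how large $n$ must be (in terms of $k,j,l$) for the arithmetic-progression tail $R_{k,j,l}$ and the $\pm$ symmetry to introduce \emph{no} spurious edges — in particular ruling out that $x - (-y) = x+y$ accidentally equals a jump for $x,y$ in the blocks, and that consecutive terms of $R_{k,j,l}$ (spacing $k+j+1$) never differ by something in $\{1,\dots,k-1,k+j\}\cup R_{k,j,l}$; the spacing was clearly chosen precisely to defeat this, so it is a matter of writing the inequalities carefully. In case (i) the obstacle is instead that $n=3k$ is \emph{not} "sufficiently large", so the clean asymptotic argument fails and I must track every potential coincidence in $\mathbb{Z}_{3k}$ by hand, using Theorem \ref{circ_existence} to account for the $+1$ and checking no further collapse of orbit sizes occurs.
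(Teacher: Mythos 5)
Your treatment of case (ii) is essentially the paper's own argument: reduce to $v=0$ by vertex-transitivity, count the edges inside $\{\pm 1,\dots,\pm(k-1)\}$ and those attached to the jump $k+j$, and dismiss the arithmetic-progression tail $R_{k,j,l}$ because its common difference $k+j+1$ exceeds every element of $S_{k,j}$, so for $n$ large no sums or differences collide. One bookkeeping slip in your split into (a) and (b): the cross edges $\{i,-i'\}$ with $i+i'=k+j$ belong to your class (a) (both endpoints in $\pm\{1,\dots,k-1\}$) but are \emph{not} part of the $3\binom{k-1}{2}$; they are precisely the missing third batch of $k-1-j$ that you try to extract from $\pm(k+j)$ in (b). Each of $k+j$ and $-(k+j)$ contributes only $k-1-j$ incident edges and $\{k+j,-(k+j)\}$ is not an edge for large $n$, so (b) gives $2(k-1-j)$ and (a) gives $3\binom{k-1}{2}+(k-1-j)$; the totals then agree with the claim.

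For case (i) you have put your finger on a genuine problem, and the reason you could only close the argument for $j=0$ is that the statement is only correct for $j=0$. The extra edge $\left\{\tfrac{n}{3},-\tfrac{n}{3}\right\}$ requires $\tfrac{n}{3}=k\in S_{k,j}$, which holds exactly when $j=0$; the paper's own proof asserts ``since $k\in S_{k,j,0}$'', which is false for $j\ge 1$. Moreover, for $j\ge 1$ the modulus $n=3k$ is small enough that further wrap-around edges appear (coincidences such as $2(k+j)\equiv -i \pmod{3k}$), and the stated formula fails: for $k=3$, $j=1$, $n=9$ one has $S_{3,1}=\{1,2,4\}$ and a direct check gives $e(0)=9$, whereas the formula predicts $1+3+3=7$; indeed $7\equiv 1\pmod 3$ would contradict Theorem \ref{circ_existence}, since $n/3=3\notin S_{3,1}$. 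So relative to the lemma as stated your proof of (i) has a gap for $j\ge 1$, but that gap is unavoidable: the correct conclusion is that case (i) (and the paper's proof of it) should be restricted to $j=0$, which is in fact the only instance needed elsewhere if one takes $c-1=3\binom{k-1}{2}+3(k-1)$ there.
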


\begin{proof}
	We first show that, for $n \in \mathbb{N}$ such that $n > 3k$, the circulant $\mathsf{Circ}\left(n, S_{k, j,0}\right)$ has $3 \binom{k-1}{2} + 3(k-1-j)$ edges in every open neighbourhood.
	
	It suffices to consider the open neighbourhood of vertex $0$. The vertices $\{1, \dots, k-1\}$ and $\{-1, \dots, -k+1\}$ are neighbours of $0$ and respectively induce a $(k-1)$-clique, contributing $2\binom{k-1}{2}$ edges to $N(0)$. More so, given $1 \leq i < k$, the vertex $-k + i$ has $k - 1 - i$ neighbours in $\{1, \dots, k-1\}$, contributing another $\binom{k-1}{2}$ edges to $N(0)$. By a similar argument, there are $3(k-1-j)$ edges in $N(0)$ which are incident to either $k + j$ or $-k - j$. Hence $e(0) = 3\binom{k-1}{2} + 3(k - 1 - j)$. Hence the second case for $l = 0$ follows.
	
	We now show the second case for $l \geq 1$. Recall that $S_{k,j,l}$ is the set $S_{k,j,0}$ with an arithmetic progression of length $l$ and difference $d = k+j+1$ added to it. Given $n$ sufficiently large, and since the difference $d$ is greater than the largest value in $S_{k,j,0}$, the addition of this arithmetic progression does not change the number of edges in the open neighbourhoods, since there are no intersecting sums between these sets and their inverses. Hence $e(0) = 3\binom{k-1}{2} + 3(k - 1 - j)$ in $\mathsf{Circ}\left(n, S_{k, j, l}\right)$.
	
	Recall that the case $e(0) \equiv \mod{1}{3}$ for a circulant $\mathsf{Circ}\left(n, S\right)$ is only possible if, and only if, $\mod{n}{3} \equiv 0$ and $\frac{n}{3} \in S$. Since $k \in S_{k, j,0}$, considering the case when $n = 3k$ we get an additional edge in every open neighbourhood and therefore the first case follows.
\end{proof}

The existence of $(r,c)$-circulants for $c \equiv \mod{0}{3}$ follows almost immediately from this lemma.

\begin{proposition}\label{prop_3_6}
	Let $k, c, r$ be integers such that $k \geq 1$, $r \geq 2k$, $c \equiv \mod{0}{3}$ and $\frac{3(k-2)(k-1)}{2} \leq c \leq \frac{3k(k-1)}{2}$. Then there exists an $(r,c)$-circulant.
\end{proposition}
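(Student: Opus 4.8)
The plan is to realize the prescribed $c$ by choosing the parameter $j$ appropriately in Lemma~\ref{circulant_nhood_lemma}, and then to lengthen the jump set via the arithmetic progression $R_{k,j,l}$ in order to pump the degree up from $2k$ to the required $r$. First I would fix $k$ and examine the function $c(j) = 3\binom{k-1}{2} + 3(k-1-j)$ as $j$ runs over the integers $0 \le j \le k-1$. As $j$ decreases from $k-1$ to $0$, this expression increases in steps of $3$ from $3\binom{k-1}{2}$ up to $3\binom{k-1}{2} + 3(k-1) = \tfrac{3k(k-1)}{2}$, and the lower endpoint equals $\tfrac{3(k-2)(k-1)}{2}$. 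Hence every value of $c$ with $c \equiv 0 \pmod 3$ in the interval $\left[\tfrac{3(k-2)(k-1)}{2}, \tfrac{3k(k-1)}{2}\right]$ is hit by exactly one admissible $j$, namely $j = k - 1 - \tfrac{c - 3\binom{k-1}{2}}{3}$, and one checks $0 \le j < k$ for $c$ in this range.

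Next, having pinned down $k$ and $j$, I would invoke the second case of Lemma~\ref{circulant_nhood_lemma}: for $n$ sufficiently large and any $l \ge 0$, the circulant $\mathsf{Circ}(n, S_{k,j,l})$ has $e(v) = 3\binom{k-1}{2} + 3(k-1-j) = c$ in every open neighbourhood. The degree of this circulant is governed by $|S_{k,j,l} \cup -S_{k,j,l}|$. The base set $S_{k,j}$ consists of the $k-1$ small jumps $1,\dots,k-1$ together with one larger jump $k+j$; none of these is an involution once $n$ is large, so $|S_{k,j} \cup -S_{k,j}| = 2k$, giving $r = 2k$ when $l = 0$. Each of the $l$ terms added in $R_{k,j,l}$ is a distinct non-involution (again for $n$ large, since these terms are well below $n/2$ and mutually distinct), contributing $2$ to the degree, so $\mathsf{Circ}(n, S_{k,j,l})$ is $(2k + 2l)$-regular. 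Thus we realize exactly the even degrees $r = 2k, 2k+2, 2k+4, \dots$ Since the statement only asserts existence for $r \ge 2k$, and the hypothesis $c \equiv 0 \pmod 3$, I still need the odd degrees $r = 2k+1, 2k+3, \dots$; these follow from Fact~1 of the introduction, because if $G$ is a $(2k+2l, c)$-circulant then $G \,\square\, K_2$ is a $(2k+2l+1, c)$-graph — though to keep the conclusion within circulants one instead appends an involution jump $n/2$ (taking $n$ even) to $S_{k,j,l}$, which adds exactly $1$ to the degree and, since an involution jump lies in no triangle contributing to $N(0)$ unless $n/2 = n/3$, does not change $e(v)$; one must check that $n$ can be chosen even and large enough simultaneously. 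Finally I would take $n$ large enough to satisfy every "sufficiently large" requirement in Lemma~\ref{circulant_nhood_lemma} and to keep all jumps below $n/2$ distinct and non-involutory (except the deliberately added $n/2$), which is possible since only finitely many congruence and size constraints are in play.

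The main obstacle I anticipate is bookkeeping around the "sufficiently large $n$" clause and the parity of the degree: one must verify that adding the progression $R_{k,j,l}$ genuinely introduces no new coincident sums (so that $e(v)$ is unchanged) and that the resulting jumps are all distinct and non-involutions, and separately handle the passage from even to arbitrary $r \ge 2k$. Both points are routine once the arithmetic-progression difference $d = k+j+1$ is seen to exceed the largest element of $S_{k,j}$, exactly as in the proof of Lemma~\ref{circulant_nhood_lemma}; so the proposition should follow in a few lines by combining the $j$-selection computation with that lemma.
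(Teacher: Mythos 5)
Your proposal is correct and follows essentially the same route as the paper: pick $j$ so that $c = 3\binom{k-1}{2}+3(k-1-j)$, use $\mathsf{Circ}(n,S_{k,j,l})$ for $n$ large to get the even degrees $r=2(k+l)$, and append the involution $\tfrac{n}{2}$ (choosing $n$ even and large enough that $\tfrac{n}{2}$ is not a sum of two jumps) to reach odd $r$ without changing $e(v)$. The only cosmetic difference is your detour through Fact~1, which you rightly discard in favour of the involution trick to stay within circulants.
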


\begin{proof}
	Let $j$ be an integer such that $0 \leq j < k$ and $c = 3\binom{k-1}{2} + 3(k - 1 - j)$. Suppose there exists $l \in \mathbb{Z}$ such that $r = 2(k + l)$. For sufficiently large $n$, by Lemma \ref{circulant_nhood_lemma} the circulant $\mathsf{Circ}\left(n, S_{k, j, l}\right)$ is an $(r,c)$-circulant.
	
	Otherwise if $r = 2(k+l) + 1$, choose $n$ sufficiently large such that no sum from $S_{k, j, l}$ is equal to $\frac{n}{2}$. Then $\mathsf{Circ}\left(n, S_{k, j, l} \cup \left\{\frac{n}{2}\right\}\right)$ is an $(r,c)$-circulant, since $n$ is such that the addition of $\frac{n}{2}$ does not introduce any new edges in the open neighbourhoods. The result follows.
\end{proof}

We now consider the case when $c \equiv \mod{1}{3}$, summarised in the following two propositions for $r \equiv \mod{0}{2}$ and $r \equiv \mod{1}{2}$ respectively.

\begin{proposition}
	Let $k, c, r$ be integers such that $k \geq 1$, $r \geq 2k$, $r \equiv \mod{0}{2}$, $c \equiv \mod{1}{3}$ and $\frac{3(k-2)(k-1)}{2} \leq c \leq \frac{3k(k-1)}{2}$. Then there exists an $(r,c)$-circulant.
\end{proposition}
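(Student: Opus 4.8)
The plan is to argue as in Proposition~\ref{prop_3_6}, the only new ingredient being that we invoke part~(i) of Lemma~\ref{circulant_nhood_lemma} in place of part~(ii): it is exactly the hypothesis ``$\tfrac{n}{3}\in S$'' appearing there that produces the extra $+1$ in $e(v)$ which is forced when $c\equiv\mod{1}{3}$ (compare Theorem~\ref{circ_existence}).

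First I would fix the parameter $j$. Given $c\equiv\mod{1}{3}$ with $3\binom{k-1}{2}\le c\le 3\binom{k}{2}$, set $j=\binom{k}{2}-\tfrac{c-1}{3}$. This is a non-negative integer: it is an integer since $3\mid c-1$, and the bounds on $c$ --- using that $c\equiv\mod{1}{3}$ forces $c\ge 3\binom{k-1}{2}+1$, because $3\binom{k-1}{2}\equiv\mod{0}{3}$ --- give $0\le j\le k-1$. With this choice one has $c=1+3\binom{k-1}{2}+3(k-1-j)$. (If $k=1$ the hypotheses are contradictory and there is nothing to prove.)

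Next, write $r=2(k+l)$ with $l=\tfrac{r}{2}-k\ge 0$, which is possible because $r$ is even and $r\ge 2k$. Let $m$ be the largest element of the connection set $S_{k,j,l}$ --- so $m=k+j$ if $l=0$, and $m$ is the final term of the progression $R_{k,j,l}$ if $l\ge 1$ --- and put $n=3m$. In $\mathsf{Circ}(n,S_{k,j,l})$ every jump lies in $\{1,\dots,m\}$, hence is strictly smaller than $\tfrac{n}{2}$, so no jump is an involution of $\mathbb{Z}_{n}$; since $\lvert S_{k,j,l}\rvert=k+l$, the graph has degree $2(k+l)=r$. Moreover $\tfrac{n}{3}=m\in S_{k,j,l}$, so Lemma~\ref{circulant_nhood_lemma}(i) applies and yields, for every vertex $v$,
\[e(v)=1+3\binom{k-1}{2}+3(k-1-j)=c.\]
Thus $\mathsf{Circ}(n,S_{k,j,l})$ is an $(r,c)$-circulant.

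The one point that is not a mechanical copy of the proof of Proposition~\ref{prop_3_6} is checking that choosing $n=3m$ with $m$ the \emph{largest} jump is still ``sufficiently large'' for the neighbourhood analysis underlying Lemma~\ref{circulant_nhood_lemma}: one must verify that this choice creates no intersecting sums among the jumps and their inverses beyond the single unavoidable edge $\{\tfrac{n}{3},\tfrac{2n}{3}\}=\{m,-m\}$ (which is precisely case~(vi) in the proof of Theorem~\ref{circ_existence}). For $l=0$ this is a short direct check on the set $\{1,\dots,k-1\}\cup\{k+j\}$, and for $l\ge 1$ it follows from the same large-gap property of $R_{k,j,l}$ that is already used to prove Lemma~\ref{circulant_nhood_lemma}. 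All remaining details transcribe verbatim from Proposition~\ref{prop_3_6}.
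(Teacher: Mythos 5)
Your proof is correct, but it takes a genuinely different route from the paper's. The paper proves the $l\ge 1$ case in two steps: it first takes $n\equiv 0\pmod 3$ sufficiently large with $\tfrac n3>\max(S_{k,j,l-1})$ so that $\mathsf{Circ}(n,S_{k,j,l-1})$ is an $(r-2,c-1)$-circulant by Lemma~\ref{circulant_nhood_lemma}(ii), and then adjoins the \emph{fresh} jump $\tfrac n3$, which raises the degree by $2$ and adds exactly the one edge $\{\tfrac n3,-\tfrac n3\}$ to each open neighbourhood; for $l=0$ it invokes Lemma~\ref{circulant_nhood_lemma}(i) with $n=3k$. You instead keep the full jump set $S_{k,j,l}$ and force its \emph{largest existing} jump $m$ to play the role of $\tfrac n3$ by setting $n=3m$, handling $l=0$ and $l\ge 1$ uniformly. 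What your version buys is an explicit and small order, $n=3\max(S_{k,j,l})$, which makes the linear growth of $g(r,c)$ in $r$ concrete; it also sidesteps a genuine defect in the paper's $l=0$ step, since with $n=3k$ and $j\ge k/2$ the jump $k+j$ is an involution or wraps past $\tfrac n2$ (e.g.\ $k=2$, $j=1$, $n=6$ gives a $3$-regular graph with $e(v)=0$), whereas your $n=3(k+j)$ does not. The price is that you can no longer quote Lemma~\ref{circulant_nhood_lemma} off the shelf, because $n=3m$ is not ``sufficiently large'' in the lemma's sense, and you only assert rather than carry out the required recount. That check does go through: writing $d=k+j+1$, every element of $R_{k,j,l}$ is $\equiv -1\pmod d$ and $m=(l+1)d-1$, so for jumps $a,b$ the only sum or difference that lands in $\pm S_{k,j,l}$ modulo $n=3m$ beyond those already present for large $n$ is $m+m=2m\equiv -m$, yielding exactly the single extra edge $\{m,-m\}$; I would want that computation written out (it is the whole content of the deviation from Lemma~\ref{circulant_nhood_lemma}), but it is routine and your proof is sound.
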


\begin{proof}
	Let $j$ be an integer such that $0 \leq j < k$ and $c - 1 = 3\binom{k-1}{2} + 3(k - 1 - j)$. Let $l$ be an integer such that $r = 2(k + l)$. The case when $l = 0$ follows immediately by Lemma \ref{circulant_nhood_lemma} (i), considering the circulant $\mathsf{Circ}\left(3k, S_{k, j, 0}\right)$. 
	
	Consider the case when $l \geq 1$. Let $n$ be such that $n \equiv \mod{0}{3}$ and $\frac{n}{3} > \max\left(S_{k,j,l-1}\right)$. In particular let $n$ be sufficiently large such by Lemma \ref{circulant_nhood_lemma} the circulant $\mathsf{Circ}\left(n, S_{k, j, l - 1}\right)$ is an $(r - 2, c - 1)$-circulant.
	
	Since $\frac{n}{3} > \max\left(S_{k,j,l-1}\right)$, the addition of $\frac{n}{3}$ to $S_{k, j, l - 1}$ increases the degree by 2 but only adds a single edge to every open neighbourhood. 
	
	Therefore $\mathsf{Circ}\left(n, S_{k, j, l - 1} \cup \left\{\frac{n}{3}\right\}\right)$ is an $(r, c)$-circulant, as required.
\end{proof}

\begin{proposition}
	Let $k, c, r$ be integers such that $k \geq 1$, $r \geq 2k + 3$, $r \equiv \mod{1}{2}$, $c \equiv \mod{1}{3}$ and $\frac{3(k-2)(k-1)}{2} \leq c \leq \frac{3k(k-1)}{2}$. Then there exists an $(r,c)$-circulant.
\end{proposition}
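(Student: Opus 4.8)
The plan is to follow the same template as the two preceding propositions; the only new feature is that, since $r$ is odd, the degree of a base circulant must be raised by an odd amount while still adding only a single edge to each open neighbourhood. This is obtained by adjoining to a suitable set $S_{k,j,l-1}$ both the involution $\tfrac{n}{2}$, which raises the degree by $1$, and the jump $\tfrac{n}{3}$, which raises the degree by $2$ and --- exactly as in the $r\equiv 0\pmod 2$ case --- contributes precisely one edge to every open neighbourhood.

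First I would fix the integer $j$ with $0\le j<k$ for which $c-1=3\binom{k-1}{2}+3(k-1-j)$; this is possible because $c\equiv 1\pmod 3$ makes $c-1$ a multiple of $3$, and the bounds $\tfrac{3(k-2)(k-1)}{2}\le c\le\tfrac{3k(k-1)}{2}$ force $c-1$ into the interval $\big[\,3\binom{k-1}{2},\,3\binom{k}{2}\,\big]$, which is precisely the set of values of $3\binom{k-1}{2}+3(k-1-j)$ for $j\in\{0,\dots,k-1\}$ (this also settles the degenerate case $k=1$, where the hypotheses on $c$ are vacuous). Next, set $l=\tfrac{r-1}{2}-k$; since $r$ is odd with $r\ge 2k+3$, this is an integer with $l\ge 1$, so $S_{k,j,l-1}$ is defined, and by Lemma \ref{circulant_nhood_lemma}(ii) any sufficiently large $n$ makes $\mathsf{Circ}\big(n,S_{k,j,l-1}\big)$ a $\big(2(k+l-1),\,c-1\big)$-circulant.

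Now take such an $n$ that is in addition divisible by $6$ and satisfies $\tfrac{n}{3}>\max\big(S_{k,j,l-1}\big)$, and put $S'=S_{k,j,l-1}\cup\{\tfrac{n}{3},\tfrac{n}{2}\}$. As every element of $S_{k,j,l-1}$ is less than $\tfrac{n}{3}$, none of them is an involution, so $\mathsf{Circ}(n,S')$ is $\big(2(k+l-1)+2+1\big)$-regular, i.e.\ $r$-regular. For $e(0)$: exactly as in the $r\equiv 0\pmod 2$ proposition the new jump $\tfrac{n}{3}$ contributes only the edge $\{\tfrac{n}{3},-\tfrac{n}{3}\}$, and, as in the proof of Proposition \ref{prop_3_6}, the involution $\tfrac{n}{2}$ contributes no edge; the one extra check is that $\tfrac{n}{3}$ and $\tfrac{n}{2}$ do not jointly produce an edge, which holds since $\tfrac{n}{2}-\tfrac{n}{3}=\tfrac{n}{6}$ exceeds $\max(S_{k,j,l-1})$ and is neither $\tfrac{n}{3}$ nor $\tfrac{n}{2}$. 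Hence $e(0)=(c-1)+1+0=c$, and by vertex-transitivity $e(v)=c$ for every $v$, so $\mathsf{Circ}(n,S')$ is an $(r,c)$-circulant.

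I expect the only mildly delicate point to be verifying that among the three ``large'' jump-classes $\tfrac{n}{2},\tfrac{n}{3},\tfrac{2n}{3}$ together with the ``small'' jumps of $S_{k,j,l-1}$, the edge $\{\tfrac{n}{3},-\tfrac{n}{3}\}$ is the only one created --- but this is routine once $n$ is large and divisible by $6$, since all pairwise sums and differences of the large jumps are multiples of $\tfrac{n}{6}$ that are too big to lie in $S_{k,j,l-1}$ and distinct from $\tfrac{n}{3}$ and $\tfrac{n}{2}$.
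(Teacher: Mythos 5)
Your proof is correct and follows essentially the same route as the paper: choose $j$ so that $c-1=3\binom{k-1}{2}+3(k-1-j)$, start from the $(r-3,c-1)$-circulant $\mathsf{Circ}(n,S_{k,j,l-1})$ for large $n\equiv 0\pmod 6$, then adjoin $\tfrac{n}{3}$ (degree $+2$, one new edge per neighbourhood) and $\tfrac{n}{2}$ (degree $+1$, no new edges). The only cosmetic slip is that you initially impose $\tfrac{n}{3}>\max(S_{k,j,l-1})$ but later use $\tfrac{n}{6}>\max(S_{k,j,l-1})$ (the condition the paper states from the outset); since $n$ may be taken arbitrarily large this is harmless.
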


\begin{proof}
	Let $j$ be an integer such that $0 \leq j < k$ and $c - 1 = 3\binom{k-1}{2} + 3(k - 1 - j)$. Let $l$ be an integer, $l \geq 1$, such that $r = 2(k + l) + 1$.
		
	Furthermore, let $n$ be such that $n \equiv \mod{0}{6}$ and $\frac{n}{6} > \max\left(S_{k,j,l-1}\right)$. In particular let $n$ be sufficiently large such by Lemma \ref{circulant_nhood_lemma} the circulant $\mathsf{Circ}\left(n, S_{k, j, l - 1}\right)$ is an $(r - 3, c - 1)$-circulant.
	
	Since $\frac{n}{6} > \max\left(S_{k,j,l-1}\right)$, the addition of $\frac{n}{3}$ to $S_{k, j, l - 1}$ increases the degree by 2 but only adds a single edge to every open neighbourhood. 
	
	Moreover, by our choice of $n$, no sum from $S_{k,j,l-1} \cup \left\{\frac{n}{3}\right\}$ is equal to $\frac{n}{2}$. Therefore the addition of $\frac{n}{2}$ only increases the degree by $1$ but does not increase the number of edges in the open neighbourhoods.
	
	Therefore $\mathsf{Circ}\left(n, S_{k, j, l - 1} \cup \left\{\frac{n}{3}, \frac{n}{2}\right\}\right)$ is an $(r, c)$-circulant, as required.
\end{proof}

Combining the above results and applying simple algebra we find that the construction above implies that $(r,c)$-circulants exist for $c \equiv \mod{0, 1}{3}$, $c > 0$ and $r \geq 6 + \sqrt{\frac{8c - 5}{3}}$. The case of equality is realised by the construction for the case $r = 2k + 3$ and  $c = 3\binom{k-1}{2} + 1$. Also, the fact that $(r,0)$-circulants exists for $r \geq 1$ comes from Proposition \ref{prop_3_6} (for $r \geq 2$) and by the trivial circulant $\mathsf{Circ}\left(n, \left\{\frac{n}{2}\right\}\right)$ for $n \geq 2$ even in the case when $r = 1$.

\section{Small $(r,c)$-graphs}

In this section we completely solve the existence problem of $(r,c)$-graphs for $1 \leq r \leq 6$ and $0 \leq c \leq \binom{r}{2}$. In Table \ref{rc_values} we summarise our findings, writing \textsc{NE} whenever an $(r,c)$-graph does not exist and supplying the number of vertices of the smallest known example otherwise.


\begin{table}[h!]
\[\begin{array}{c|cccccccccccccccc}
\tikz{\node[below left, inner sep=1pt] (def) {$r$};%
      \node[above right,inner sep=1pt] (abc) {$c$};%
      \draw (def.north west|-abc.north west) -- (def.south east-|abc.south east);}
 & 0 & 1 & 2 & 3 & 4 & 5 & 6 & 7 & 8 & 9 & 10 & 11 & 12 & 13 & 14 & 15\\
\hline
1 & 2 & \cellcolor{lightgray} & \cellcolor{lightgray} & \cellcolor{lightgray} & \cellcolor{lightgray} & \cellcolor{lightgray} & \cellcolor{lightgray} & \cellcolor{lightgray} & \cellcolor{lightgray} & \cellcolor{lightgray} & \cellcolor{lightgray} & \cellcolor{lightgray} & \cellcolor{lightgray} & \cellcolor{lightgray} & \cellcolor{lightgray} & \cellcolor{lightgray}\\
2 & 4 & 3 & \cellcolor{lightgray} & \cellcolor{lightgray} & \cellcolor{lightgray} & \cellcolor{lightgray} & \cellcolor{lightgray} & \cellcolor{lightgray} & \cellcolor{lightgray} & \cellcolor{lightgray} & \cellcolor{lightgray} & \cellcolor{lightgray} & \cellcolor{lightgray} & \cellcolor{lightgray} & \cellcolor{lightgray} & \cellcolor{lightgray}\\
3 & 6 & 6 & \textsc{ne} & 4 & \cellcolor{lightgray} & \cellcolor{lightgray} & \cellcolor{lightgray} & \cellcolor{lightgray} & \cellcolor{lightgray} & \cellcolor{lightgray} & \cellcolor{lightgray} & \cellcolor{lightgray} & \cellcolor{lightgray} & \cellcolor{lightgray} & \cellcolor{lightgray} & \cellcolor{lightgray}\\
4 & 8 & 9 & 9 & 7 & 6 & \textsc{ne} & 5 & \cellcolor{lightgray} &  \cellcolor{lightgray} & \cellcolor{lightgray} & \cellcolor{lightgray} & \cellcolor{lightgray} & \cellcolor{lightgray} & \cellcolor{lightgray} & \cellcolor{lightgray} & \cellcolor{lightgray}\\
5 & 10 & 12 & 12 & 10 & 12 & 12 & 8 & \textsc{ne} & \textsc{ne} & \textsc{ne} & 6 & \cellcolor{lightgray} & \cellcolor{lightgray} & \cellcolor{lightgray} & \cellcolor{lightgray} & \cellcolor{lightgray} \\
6 & 12 & 15 & 15 & 13 & 12 & 12 & 11 & 12 & 12 & 9 & 9 & \textsc{ne} & 8 & \textsc{ne} & \textsc{ne} & 7
\end{array}\]
\caption{Existence and non-existence of $(r, c)$-graphs for $1 \leq r \leq 6$, with the size of the smallest known construction listed in the case of existence.}
\label{rc_values}
\end{table}%

The smallest known examples were found through a combination of constructions and computer searches. The cases for non-existence are proven in the following sub-section. A database with these examples and many more for $r \geq 7$ has been compiled and is publicly available at \cite{r_c_database}.

\subsection{Non-existence of $(r,c)$-graphs for $1 \leq r \leq 6$}

In this section we prove the non-existence cases in Table \ref{rc_values}. From Fact 3, a number of these cases are immediately ruled out, as summarised below.

\begin{proposition}
	No $(r,c)$-graph exists for $(r,c) = (3,2), (4,5), (5,9), (6,13),$ and $(6,14)$.
\end{proposition}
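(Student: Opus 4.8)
The plan is to invoke Fact 3 from the introduction, which states that for $k \geq 1$ and $r \geq 3k$ there is no $\left(r, \binom{r}{2} - k\right)$-graph. So the whole proof is a matter of checking that each of the five listed pairs $(r,c)$ can be written in the form $c = \binom{r}{2} - k$ with $k \geq 1$ and $r \geq 3k$, and then citing Fact 3.

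Concretely, I would first compute $\binom{r}{2}$ for the relevant values: $\binom{3}{2} = 3$, $\binom{4}{2} = 6$, $\binom{5}{2} = 10$, $\binom{6}{2} = 15$. Then for each pair I read off $k = \binom{r}{2} - c$ and verify the inequality $r \geq 3k$:
\begin{itemize}
	\item $(3,2)$: $k = 3 - 2 = 1$, and $r = 3 \geq 3 \cdot 1 = 3$;
	\item $(4,5)$: $k = 6 - 5 = 1$, and $r = 4 \geq 3$;
	\item $(5,9)$: $k = 10 - 9 = 1$, and $r = 5 \geq 3$;
	\item $(6,13)$: $k = 15 - 13 = 2$, and $r = 6 \geq 3 \cdot 2 = 6$;
	\item $(6,14)$: $k = 15 - 14 = 1$, and $r = 6 \geq 3$.
\end{itemize}
Since in every case $k \geq 1$ and $r \geq 3k$, Fact 3 applies directly and yields non-existence. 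I should mention (as the excerpt already does for the planar analogue) that the cases $(3,2)$, $(4,5)$, $(5,9)$, $(6,14)$ are exactly the "$c = \binom{r}{2}-1$ with $r \geq 3$" cases, while $(6,13)$ is the borderline "$c = \binom{r}{2}-2$ with $r = 6 = 3\cdot 2$" case, so that the reader sees why this particular list is what it is.

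There is essentially no obstacle here: the proposition is a bookkeeping corollary of Fact 3, and the only thing to be careful about is getting the arithmetic for $k$ and the inequality $r \geq 3k$ right in each of the five instances — in particular noticing that $(6,13)$ needs $k=2$ and sits exactly on the boundary $r = 3k$. The proof is therefore one or two sentences: "For each listed pair, writing $k = \binom{r}{2} - c$ gives $k \geq 1$ and $r \geq 3k$, so by Fact 3 no $(r,c)$-graph exists." No auxiliary lemma, no case analysis beyond the tabulation above, is required.
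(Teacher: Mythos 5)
Your proposal is correct and follows the same route as the paper: both apply Fact 3 by writing $k = \binom{r}{2} - c$ and checking $r \geq 3k$ for each listed pair (the paper handles $(6,13)$ and $(6,14)$ together via the range $\binom{r}{2} - k \leq c \leq \binom{r}{2} - 1$ with $k=2$, while you split them, but this is the same argument). No issues.
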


\begin{proof}
	Consider $(r,c) \in \left\{(6, 13), (6, 14)\right\}$. Then by Fact 3 with $r = 6$ and $k = 2$, for $c \in \{13, 14\}$ no $(6, c)$-graph is possible. The other cases follow by a similar argument.
\end{proof}

Such an argument does not work for the remaining three cases, namely $(r,c) = (5,7), (5,8),$ and $(6,11)$. Instead, we prove non-existence by considering all possible graphs on $r$ vertices and $c$ edges, and illustrate if an $(r,c)$-graph were to exist, no open neighbourhood is isomorphic to any of the possible candidate graphs.

\begin{proposition}
	No $(r,c)$-graph exists for $(r,c) = (5,7), (5,8),$ and $(6,11)$.
\end{proposition}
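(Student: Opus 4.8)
The plan is to reduce the whole statement to a finite local–compatibility check, following the strategy announced before the statement. If $G$ is an $(r,c)$-graph, then for every vertex $v$ the \emph{link} $L_v := G[N(v)]$ is a graph on exactly $r$ vertices with exactly $c$ edges, so the first step is to list, up to isomorphism, all graphs with these parameters — the \emph{candidate links}. The constraints that do the work are: for adjacent $u,v$ one has $\deg_{L_v}(u)=|N(u)\cap N(v)|=\deg_{L_u}(v)$; the graph $G[N(u)\cap N(v)]$ is simultaneously the subgraph of $L_v$ induced on $N_{L_v}(u)$ and the subgraph of $L_u$ induced on $N_{L_u}(v)$; and $|N(w)|=r$ for every $w$, so the neighbours of $w$ already forced by its memberships in closed neighbourhoods $N[v]$ cannot outnumber $r$. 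Hence a candidate link $H$ can occur as some $L_v$ only if, for every vertex $x$ of $H$, the data imposed at $x$ — namely that $x$ has exactly $\deg_H(x)$ common neighbours with $v$, that they induce the graph $H[N_H(x)]$, and that $x\sim v$ — can be completed to a candidate link in the role of $L_x$; in particular the degree sequences and triangle counts at $x$ must match across the list. Two recurring devices will be that a candidate link containing an isolated triangle or an isolated edge is impossible (too few remaining edges), and that a vertex $u$ of degree $r-1$ in $L_v$ forces $N[u]=N[v]$, which then propagates. The global identities $\sum_v e(L_v)=3\,t(G)$ and $\sum_v t(L_v)=4\,q(G)$, with $t(\cdot)$ the number of triangles and $q(G)$ the number of copies of $K_4$, are also useful; the first already gives $3\mid n$ in each of the three cases.

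For $(5,7)$ the candidate links are the complements of the four $3$-edge graphs on five vertices — $K_3$, $P_4$, $K_{1,3}$ and $P_3\cup K_2$ — with degree sequences $(4,4,2,2,2)$, $(4,3,3,2,2)$, $(4,3,3,3,1)$ and $(3,3,3,3,2)$. I would eliminate these in turn: from the $(4,4,2,2,2)$ link a degree-$2$ vertex of $L_v$ receives a link containing an isolated triangle; from the $(4,3,3,3,1)$ link its degree-$1$ vertex receives a link with an isolated edge; from the $(4,3,3,2,2)$ link a degree-$2$ vertex of $L_v$ receives a link that cannot accommodate $7$ edges; and from the $(3,3,3,3,2)$ link the degree-$2$ vertex forces one of its neighbours to acquire a sixth neighbour in $G$. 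The parameters $(5,8)$ are handled identically, now with only two candidate links — the complements of $2K_2$ and of $P_3$ (padded to five vertices) — with degree sequences $(4,3,3,3,3)$ and $(4,4,3,3,2)$.

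The case $(6,11)$ is the bulk of the proof. Its candidate links are the complements of the nine $4$-edge graphs on at most six vertices: $P_5$, the chair (the tree on five vertices that is neither a path nor a star), $K_{1,4}$, $C_4$, the paw ($K_3$ with a pendant edge), $K_3\cup K_2$, $P_4\cup K_2$, $K_{1,3}\cup K_2$ and $P_3\cup P_3$, each padded with isolated vertices to six vertices. For every one of these nine I would run the same two-step argument: read off the common-neighbour data forced at each vertex, then confront it with the list of permissible candidate links, reaching in each case either a vertex of $G$ of the wrong degree, or an impossible link component, or a triangle/$K_4$-count mismatch; the divisibility $3\mid n$ and the degree-$(r-1)$ device trim several of the nine quickly.

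The step I expect to be the main obstacle is exactly the $(6,11)$ enumeration and case analysis: there are nine candidate links, the contradictions are not uniform, and the three almost-regular links — the complements of $P_4\cup K_2$, $K_{1,3}\cup K_2$ and $P_3\cup P_3$, with degree sequences $(4,4,4,4,3,3)$, $(4,4,4,4,4,2)$ and $(4,4,4,4,3,3)$ and no vertex of degree $5$ — will likely require pushing the compatibility conditions two steps out rather than one. Verifying that the list of candidate links is complete also needs care. Once the enumeration is fixed, however, each individual elimination is a short degree- and triangle-counting argument of exactly the kind used for $(5,7)$.
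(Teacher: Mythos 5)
Your proposal follows essentially the same route as the paper: the paper enumerates the $2$, $4$ and $9$ candidate links (graphs on $r$ vertices with $c$ edges) and for each exhibits a ``bad'' vertex $a$ whose own open neighbourhood, once the forced adjacencies and the degree saturation of the remaining link vertices inside $N[v]$ are accounted for, cannot reach $c$ edges --- exactly your bad-vertex/isolated-component device. Your enumerations and your $(5,7)$, $(5,8)$ eliminations match the paper's, and the paper dispatches all nine $(6,11)$ candidates by the same one-step bad-vertex check (recorded in a figure), so the two-step propagation you anticipate needing for the almost-regular links turns out to be unnecessary.
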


\begin{proof}
	Suppose that there exists a graph $G$ which is a $(5,8)$-graph. Then every open neighbourhood in $G$ is isomorphic to a graph with $5$ vertices and $8$ edges. There are only two such graphs (\cite{harary}, p. 217), namely $K_1 + C_4$ and $\overline{P_3 \cup 2K_1}$. 

	\begin{figure}[h!]
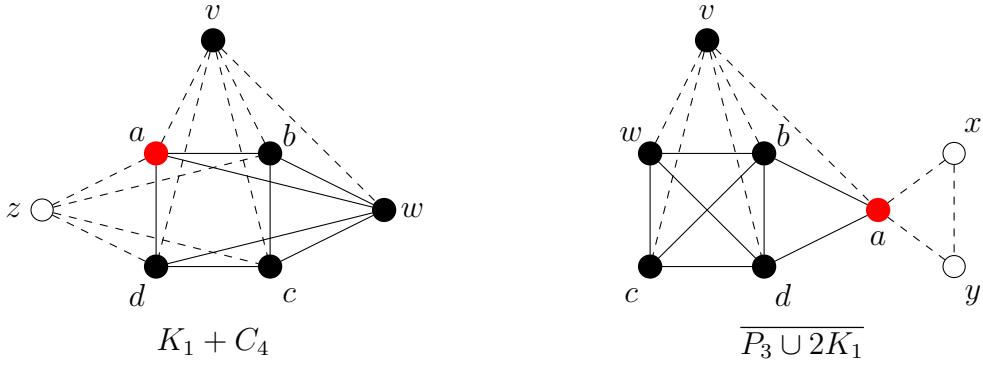

		\centering
		\ctikzfig{figures/bvl_1}
		\caption{Illustration of a `bad' vertex $a$ in the open neighbourhood $\{a,b,c,d,w\}$ of a vertex $v$ in a (hypothetical) $(5,8)$-graph. Both possible cases are shown, corresponding to the two graphs on $5$ vertices and $8$ edges.}
		\label{bvl_1}
	\end{figure}
	
	Consider a vertex $v$ in $G$ such that $N(v)$ is isomorphic to $K_1 + C_4$. Let $a, b, c, d$ and $w$ be the vertices in $N(v)$, such that $w$ is adjacent to $a, b, c$ and $d$. Therefore $v$ and $w$ have degree $5$ in the closed neighbourhood, whilst $a, b, c$ and $d$ have degree $4$ and consequently must have some other neighbour not in $N[v]$.
	
	Without loss of generality, consider vertex $a$ and let $z$ be the neighbour of $a$ not in $N[v]$. Then the open neighbourhood of $a$ is $N(a) = \{z, v, w, b, d\}$, where $v, w, b$ and $d$ have $5$ edges between them. Note that $z$ can be adjacent to $b$ and $d$, but not to $v$ and $w$ since they have degree $5$ in $N[v]$. Consequently, $z$ contributes at most two edges to $N(a)$ and therefore the open neighbourhood of $a$ has no more than $7$ edges in $G$. But $G$ is a $(5,8)$-graph, and therefore we have a contradiction.
	
	Otherwise, suppose that $N(v)$ is isomorphic to $\overline{P_3 \cup 2K_1}$, such that the vertices $b, c, d, w$ induce a $4$-clique in the open neighbourhood, whilst $a$ is only adjacent to $b$ and $d$. Then in the closed neighbourhood $N[v]$, the vertices $b, c, d$ and $w$ all have degree $5$, but vertex $a$ has degree $3$ and therefore must have two other neighbours $x$ and $y$ which are not in $N[v]$.
	
	The open neighbourhood of $a$ is $N(a) = \{v, b, d, x, y\}$, where $v, b$ and $d$ have $3$ edges between them. Note that $x$ and $y$ may possibly have an edge between them, but cannot be adjacent to $v, b$ or $d$ (as otherwise they would have degree greater than $5$). Therefore the open neighbourhood of $a$ has no more than $4$ edges in $G$, obtaining a contradiction once more.
		
	These two cases are illustrated in Figure \ref{bvl_1}. It follows that such a graph cannot exist. The cases when $(r,c) = (5,7)$ and $(6, 11)$ follow similarly, with $4$ and $9$ possibilities respectively (\cite{harary} p. 217, p. 223), illustrated in Figure \ref{bvl_5_8}.
\end{proof}

\begin{figure}[h!]
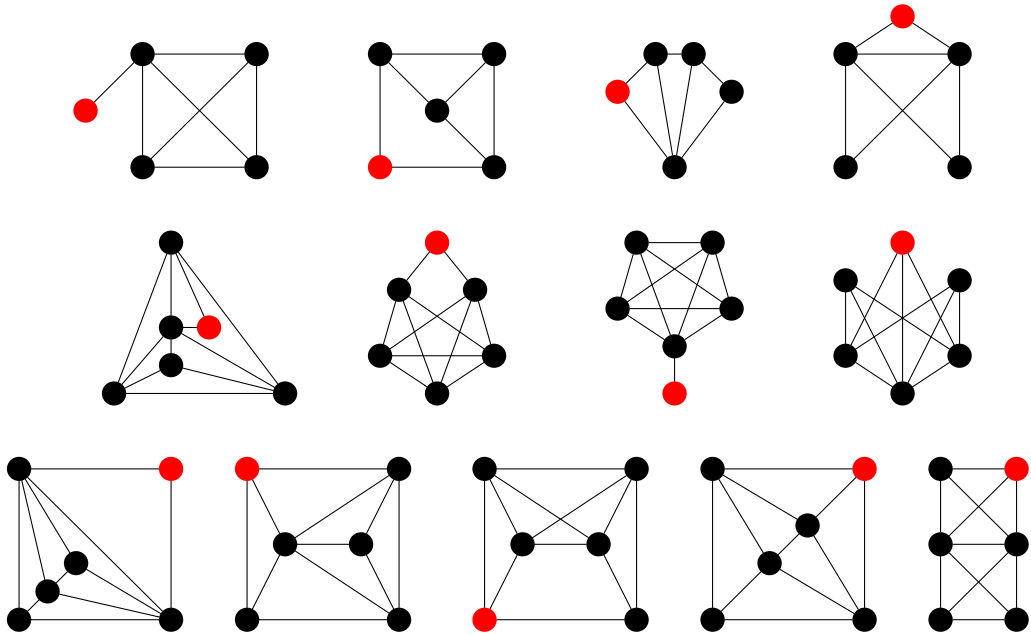

		\centering
		\ctikzfig{figures/bvl_5_8}
		\vspace*{-1.5cm}
		\caption{Illustration of a `bad' vertex in all possible open neighbourhoods for a (hypothetical) $(5,7)$-graph and $(6,11)$-graph.}
		\label{bvl_5_8}
\end{figure}

\section{Concluding remarks}

We conclude by mentioning a number of open problems, as well as providing some remarks on the compilation of our $(r,c)$-graph database.

\subsection{Open problems}

We have completely solved the existence problem for $(r,c)$-planar graphs. Still of interest is the existence of $(r,c)$-graphs in other graph families, such as 1-planar graphs \cite{FABRICI2007854}. Another existence problem of interest is that of $k$-colourable $(r,c)$-graphs, namely to determine $\mathrm{spec}(k, r) = \left\{c \colon \exists \ \mbox{$(r,c)$-graph with $\chi(G) \leq k$}\right\}$.

We have also established a database concerning $(r,c)$-graphs, and it would be of interest to extends the results of Section 4 to completely determine the existence and non-existence of $(r,c)$-graphs for $r = 7, 8$.

The determination of $h(r,c) = \min\left\{|G| \colon  \mbox{$G$ is an $(r,c)$-graph} \right\}$ and $g(r,c) =$ $\min\left\{|G| \colon \mbox{$G$ is an $(r,c)$-circulant}\right\}$ are also problems of interest. This is largely due to the role these graphs play in other problems (such as Ramsey-type problems, flip-colourings of graphs, etc).

Our results in Section 3 reveal that for a fixed $c = \mod{0,1}{3}$,  $g(r,c)$ grows linearly with $r$.  Likewise, our results in Section 4 reveal that $h(r,c)$ grows linearly in $r$ for $c = \mod{2}{3}$ as  well. 

Certainly another important problem is to determine for which $c$,  $\frac{r^2}{2}  - 5 r^\frac{3}{2} \leq c \leq \binom{r}{2}$, there exists $(r,c)$-graphs.

There are a number of other different directions worth exploring. Recently there has been growing interest in the opposite direction, where $e(v)$ is distinct for all vertices \cite{stevanovic2024searching}, as well as in a number of adjacent areas \cite{link_reg}.

\subsection{Searching for $(r,c)$-graphs}

The $(r,c)$-graph database we have compiled at \cite{r_c_database} has been generated from a number of different sources and computational techniques. Many large instances were found from the \verb|GraphData| repository \cite{graphData}. A substantial number of small instances were generated using \verb|geng| \cite{MCKAY201494} as well as \verb|plantri| \cite{397541} for $(r,c)$-planar graphs. Presently, the database contains 1794 distinct $(r,c)$-graphs for $c > 0$, with $r$ ranging from $2$ to $776$. Additionally, there are 1887 non-bipartite $(r,0)$-graphs, as well as 1007 planar $(r,c)$-graphs (51 of which have $c > 0$). In the case that a graph has a constant-link, we also give what it is. We also supply any comments from the \verb|GraphData| repository, if available.

Not all graphs we have generated are in the database (for practical reasons), however we include in \cite{r_c_database} the necessary extensions to \verb|geng| which allow for the efficient generation of $(r,c)$-graphs of a given order $n$.

In \verb|geng|, graphs of a given order $n$ are constructed by adding vertices one by one. A graph generated at some intermediate step has the property that it is a vertex-induced subgraph of any graph generated subsequently. Suppose that at some intermediate step the graph constructed has some vertex with degree greater than $r$ or an open neighbourhood with more than $c$ edges. Then clearly continuing to add vertices to this graph will not result in an $(r,c)$-graph on $n$ vertices, and therefore we can `prune' the search space by not considering this candidate graph any further. 

We achieve this by making use of the \verb|PRUNE| and \verb|PREPRUNE| preprocessor variables for \verb|geng|, in order to define appropriate functions which do such pruning. A number of other considerations are also made in order to find $(r,c)$-graphs effectively.

\section*{Acknowledgments}

The authors thank Brendan McKay for a number of insightful remarks into the implementations of \verb|geng| and \verb|plantri|.

\bibliographystyle{plain}
\bibliography{r_c_graphs_bibliography}

\end{document}